\documentclass[11pt,A4]{article}
\usepackage[dvips]{graphicx}
\usepackage{amsmath}
\usepackage{amssymb,color}
\usepackage{enumerate}
\usepackage{epic}
\usepackage{pstricks}
\usepackage{pst-node}
\usepackage{pstricks-add}
\usepackage{multirow}

\textwidth=16cm
\tolerance = 10000
\oddsidemargin 0cm \evensidemargin 0cm \textheight=23,5truecm
\topmargin=-0.5truecm
\parskip 2mm
\unitlength=1cm

\def\BBox{\kern  -0.2cm\hbox{\vrule width 0.2cm height 0.2cm}}

\newtheorem{theorem}{Theorem}[section]
\newtheorem{lemma}[theorem]{Lemma}

\newtheorem{definition}[theorem]{Definition}
\newtheorem{corollary}[theorem]{Corollary}
\newtheorem{proposition}[theorem]{Proposition}

\newcommand{\grCfive}[2]{
\psset{linewidth=#1pt}
\SpecialCoor
\degrees[7]
\cnode*(1;1.75){#2pt}{Q71}
\cnode*(1;2.75){#2pt}{Q72}
\cnode*(1;4.75){#2pt}{Q74}
\cnode*(1;5.75){#2pt}{Q75}
\cnode*(1;7.75){#2pt}{Q77}
\ncline{Q71}{Q74}
\ncline{Q71}{Q75}
\ncline{Q72}{Q75}
\ncline[linewidth=0.5pt,linestyle=dotted, dotsep=0.5pt]{Q72}{Q77}
\ncline{Q74}{Q77} }

\newcommand{\grCseven}[2]{
\psset{linewidth=#1pt}
\SpecialCoor
\degrees[7]
\cnode*(1;1.75){#2pt}{Q71}
\cnode*(1;2.75){#2pt}{Q72}
\cnode*(1;3.75){#2pt}{Q73}
\cnode*(1;4.75){#2pt}{Q74}
\cnode*(1;5.75){#2pt}{Q75}
\cnode*(1;6.75){#2pt}{Q76}
\cnode*(1;7.75){#2pt}{Q77}
\ncline{Q71}{Q74}
\ncline{Q71}{Q75}
\ncline{Q72}{Q75}
\ncline{Q72}{Q76}
\ncline{Q73}{Q76}
\ncline{Q73}{Q77}
\ncline{Q74}{Q77}}

\newcommand{\grPfourPtwo}[2]{
\psset{linewidth=#1pt}
\SpecialCoor
\degrees[7]
\cnode*(1;1.75){#2pt}{Q71}
\cnode*(1;2.75){#2pt}{Q72}
\cnode*(1;3.75){#2pt}{Q73}
\cnode*(1;4.75){#2pt}{Q74}
\cnode*(1;5.75){#2pt}{Q75}
\cnode*(1;6.75){#2pt}{Q76}
\cnode*(1;7.75){#2pt}{Q77}
\ncline[linestyle=dashed, dash=2pt 3pt]{Q71}{Q72}
\ncline[linestyle=dashed, dash=2pt 3pt]{Q71}{Q77}
\ncline[linewidth=0.5pt,linestyle=dotted, dotsep=0.5pt]{Q71}{Q73}
\ncline[linewidth=0.5pt,linestyle=dotted, dotsep=0.5pt]{Q71}{Q76}
\ncline[linestyle=dashed, dash=2pt 3pt]{Q73}{Q74}
\ncline[linestyle=dashed, dash=2pt 3pt]{Q75}{Q76} }

\newcommand{\grPtwo}[2]{
\psset{linewidth=#1pt}
\cnode*( 0.7, 0.7){#2pt}{Q42}
\cnode*( 0.7,-0.7){#2pt}{Q43}
\cnode*(-0.7,-0.7){#2pt}{Q44}
\ncline{Q42}{Q44}
\ncline[linestyle=dashed, dash=2pt 3pt]{Q43}{Q44} }

\newcommand{\grTwoKTwo}[2]{
\psset{linewidth=#1pt}
\cnode*(-0.7, 0.7){#2pt}{Q41}
\cnode*( 0.7, 0.7){#2pt}{Q42}
\cnode*( 0.7,-0.7){#2pt}{Q43}
\cnode*(-0.7,-0.7){#2pt}{Q44}
\ncline{Q41}{Q43}
\ncline{Q42}{Q44} }

\newcommand{\grPThree}[2]{
\psset{linewidth=#1pt}
\cnode*(-0.7, 0.7){#2pt}{Q41}
\cnode*( 0.7, 0.7){#2pt}{Q42}
\cnode*( 0.7,-0.7){#2pt}{Q43}
\cnode*(-0.7,-0.7){#2pt}{Q44}
\ncline[linestyle=dashed, dash=2pt 3pt]{Q41}{Q42}
\ncline[linewidth=0.5pt,linestyle=dotted, dotsep=0.5pt]{Q41}{Q44}
\ncline[linewidth=0.5pt,linestyle=dotted, dotsep=0.5pt]{Q42}{Q43} }

\newcommand{\grHoneFive}[2]{
\psset{linewidth=#1pt}
\SpecialCoor
\degrees[5]
\cnode*(1;5.25){#2pt}{Q51}
\cnode*(1;4.25){#2pt}{Q52}
\cnode*(1;2.25){#2pt}{Q54}
\ncline{Q51}{Q54}
\ncline{Q52}{Q54} }

\newcommand{\grGoneFive}[2]{
\psset{linewidth=#1pt}
\SpecialCoor
\degrees[5]
\cnode*(1;1.25){#2pt}{Q50}
\cnode*(1;5.25){#2pt}{Q51}
\cnode*(1;4.25){#2pt}{Q52}
\cnode*(1;3.25){#2pt}{Q53}
\cnode*(1;2.25){#2pt}{Q54}
\ncline{Q50}{Q52}
\ncline{Q50}{Q53}
\ncline{Q51}{Q53}
\ncline{Q51}{Q54}
\ncline{Q52}{Q54} }

\newcommand{\grHtwoFive}[2]{
\psset{linewidth=#1pt}
\SpecialCoor
\degrees[5]
\cnode*(1;1.25){#2pt}{Q50}
\cnode*(1;5.25){#2pt}{Q51}
\cnode*(1;4.25){#2pt}{Q52}
\cnode*(1;2.25){#2pt}{Q54}
\ncline[linestyle=dashed, dash=2pt 3pt]{Q50}{Q51}
\ncline[linestyle=dashed, dash=2pt 3pt]{Q50}{Q54}
\ncline[linestyle=dashed, dash=2pt 3pt]{Q51}{Q52} }


\title{Biregular cages  of girth five.}
\author{
M. Abreu$^1$,
G. Araujo-Pardo$^2$,
C. Balbuena$^3$, \\
D. Labbate$^1$,
G. L\'opez-Ch\'avez$^2$
\\
$^1$ {\em Dipartimento di Matematica}\\
Universit\`a degli Studi della Basilicata,\\
Viale dell'Ateneo Lucano, I-85100 Potenza, Italy \\
marien.abreu@unibas.it; domenico.labbate@unibas.it
\\[1ex]
$^2$ {\em Instituto de Matem\'aticas } \\
Universidad Nacional Aut\'onoma de M\'exico \\
Ciudad Universitaria,
M\'exico D.F. 04510,
M\'exico. \\
garaujo@math.unam.mx; gloria04@gmail.com
\\[1ex]
$^3$ {\em Departament de Matem\`atica Aplicada III}\\
 Universitat Polit\`ecnica de Catalunya \\
 Campus
Nord, Edifici C2,
C/ Jordi Girona 1 i 3\\ E-08034 Barcelona, Spain. \\
m.camino.balbuena@upc.edu
}

\date{}

\begin{document}
\maketitle

\begin{abstract}

Let $2 \le r < m$ and $g$ be positive integers. An \emph{$(\{r,m\};g)$--graph} (or biregular graph) is a graph with degree set $\{r,m\}$ and girth
$g$, and an \emph{$(\{r,m\};g)$--cage} (or biregular cage) is an $(\{r,m\};g)$--graph of minimum order $n(\{r,m\};g)$. If $m=r+1$, an
$(\{r,m\};g)$--cage is said to be a \emph{semiregular cage}.

In this paper we generalize the reduction and graph amalgam operations from \cite{AABL11} 
on the incidence graphs of an affine and a biaffine plane
obtaining two new infinite families of biregular cages and two new semiregular cages.
The constructed new families are $(\{r,2r-3\};5)$--cages for all $r=q+1$ with $q$ a prime power,
and $(\{r,2r-5\};5)$--cages for all $r=q+1$ with $q$ a prime. The new semiregular cages are constructed for $r=5$ and $6$ with $31$ and $43$ vertices respectively.
\end{abstract}

{\bf Key words.} biregular, cage,  girth.

\section {Introduction}\label{intro}

All graphs considered are finite, undirected and simple (without loops or
multiple edges). For definitions and notations not explicitly stated the reader may refer to
\cite{BM}, \cite{B93} and \cite{CL96}.

Let $G $ be a graph with vertex set $V = V (G)$ and edge set $E =
E(G)$. The {\em girth} of a graph $G$ is the length $g = g(G)$ of
a shortest cycle. The {\em degree} of a vertex $v \in V$ is the
number of vertices adjacent to $v$.
A graph is called {\em
$r$--regular} if all its vertices have the same degree $r$.
A {\em $(r,g)$--graph}
is a $r$--regular graph of girth $g$ and a {\em $(r,g)$--cage} is
a $(r,g)$--graph with the smallest possible number of vertices.
Cages have been intensely studied
since they were introduced by Tutte \cite{T47} in $1947$.
Erd\H{o}s and Sachs \cite{ES63} proved the existence of a
$(r,g)$--graph for any value of $r$ and $g$.
Biggs is the author of an impressive report on distinct
methods for constructing cubic cages \cite{B98}.
More details about constructions of cages can be found in
the surveys by Wong \cite{W82}, by Holton and Sheehan
\cite[Chapter 6]{HoSh93}, or the recent one by Exoo and Jajcay
\cite{EJ08}.

The cages theory has been generalized in many ways,
one such is as follows: if $D=\{a_1,\ldots, a_k\}$ is a set of positive integers with $2\le a_1<a_2<\ldots<a_k$ then a $(D;g)$--graph is a graph with degree
set $D$ and girth $g$ and a $(D;g)$--cage is a $(D;g)$--graph with minimum order $n(D;g)=n(a_1,\ldots, a_k;g)$. It is obvious that the $(r; g)$--cage is a special case of the $(D; g)$--cage when $D=\{r\}$.

Few values of $n(D;g)$ are known. In particular, Kapoor et al. \cite{KPW77} proved that $n(D;3)=1+a_k$.
Moreover, the following lower bound for $n(D;g)$ was given by Downs
et al. \cite{DGM81}:
\begin{equation}\label{Dcota}
n(D;g)\ge \left\{\begin{array}{ll } \displaystyle 1+\sum_{i=1}^{t}
a_k(a_1-1)^{i-1}
 \,  &\mbox{if } g=2t+1; \\[2ex]
 \displaystyle 1+\sum_{i=1}^{t-1} a_k(a_1-1)^{i-1}+(a_1-1)^{t-1}\,  &\mbox{if } g=2t.\\[2ex]
\end{array}\right. \end{equation}

A {\em biregular $(\{r,m\};g)$--graph} is a  $(D;g)$--graph with degree set $D=\{r,m\}$ and girth $g$ and
a {\em bi-regular $(\{r,m\};g)$--cage} is an $(\{r,m\};g)$--graph of smallest possible order. Note that bouquets of $r$ cycles of length $g$ are $(\{2,2r\};g)$-cages and the complete bipartite graphs $K_{r,m}$ are $(\{r,m\};4)$--cages.

The existence of biregular $(\{r,m\};g)$--graphs has been proved by  Chartrand, Gould, and Kapoor in \cite{CGK81} for all $2 \le r < m$ and $g \ge 3$ (also proved by F\"{u}redi et al. in \cite{FLSUW95}). On the other hand, several contructions of biregular $(\{r,m\};g)$--cages have been achieved for different values of $r$, $m$ and $g$. In particular, Chartrand et al. proved  in \cite{CGK81} that $n(\{r,m\};4)=r+m$, for $2\le r<m$, and $g=4$ and they also proved in \cite{CGK81} that $n(\{2,m\};g)$ attains the lower bound (\ref{Dcota}). Furthermore, Yuansheng and Liang in \cite{YL03} proved  that $n(\{r,m\};6)=2(rm-m+1)$
for $g=6$ and $r<m$ when $2\le r \le 5$ or $r \ge 2$ and $m-1$ a prime power and  they conjectured in \cite{YL03} that
{\em $n(\{r,m\};6)= 2(rm-m+1)$}, for any $r$ with $2 \le r<m$.

In this paper, we focus our attention on biregular graphs of girth exactly five, where $n(\{r,m\};5)=rm+1$.
Some known results in this case, Downs et al. \cite{DGM81} have shown that $n(\{3; m\}; 5)=3m+1$, for any $m \ge 4$, Hanson et al. \cite{HWJ92} have shown that
$n(\{4; m\}; 5)=4m+1$,  for any integer $m\ge 5$, and Araujo--Pardo et al. in \cite{ABV09} proved several results in this context.
Moreover, it is worth to note that Yuansheng and Liang in \cite{YL03}, claim to have proved that $n(\{5,m\};5)=5m+1$ for $m \ge 6$ in an unpublished manuscript. In Table \ref{exactv}, we summarize these and other known results on the exact values of  $n(\{r,m\};g)$, for $g\ge5$,
together with the results obtained in this paper which we now proceed to describe.

\begin{table}[h]

{\scriptsize
\begin{center}
\renewcommand{\tabcolsep}{1.2mm}
\begin{tabular}{|l|l|c|c|c|c|c|c|c|}\hline
   $r$ & $m$                 & $g=5$        & $g=6$              & $g=7$        & $g=8$                           & $g=9$        & $g=11$      \\
\hline \rule{0pt}{16pt}
   $r=3$ & $m\ge 4$          & $3m+1$       & $4m+2$             & $7m+1$       & $\displaystyle \frac{25m}{3}+5$ & $15m+1$      & $31m+1$     \\
         &                   & \cite{DGM81} & \cite{HWJ92,YL03}  & \cite{DGM81} & $m=3k$ \cite{ABV09}             & $m\ge 6$     & $m=4k$      \\
   \cline{6-6}  &            &              &                    &              & $9m+3$                          & \cite{DGM81} &\cite{ABV09} \\
                &            &              &                    &              & $m=4,5,7$ \cite{ABLM}        &              &             \\
\hline
   $r=4$ & $m\ge 5$      & $4m+1$       & $6m+2$             & $13m+1$      &                                 &              & $121m+1$    \\
         &               &\cite{HWJ92}  &\cite{YL03}         & $m=6k$       &                                 &              & $m=6k$      \\
         &               &              &                    & \cite{ABV09} &                                 &              &\cite{ABV09} \\
\hline \rule{0pt}{14pt}
   $r \ge 5$ & $m=2k(r-1)$  & $1+rm$      &                    &$1+m(r^2-r+1)$&                                 &              &$1+m\frac{(r-1)^5-1}{r-2}$\\
   $r=p^h+1$ &              &\cite{ABV09} &                    &\cite{ABV09}  &                                 &              &\cite{ABV09}  \\
   \cline{2-4}
   $p$ prime & $m=k(r-1)+1$ &             & $2(rm-m+1)$        &              &                                 &              &              \\
             &              &             & \cite{ABGMV08,YL03}&              &                                 &              &              \\
   \cline{2-4}
             &  $m=kr$      &             & \cite{ABV09}       &              &                                 &              &              \\
   \cline{2-4}
             &  $m=2r-3$    &    (*)      &                    &              &                                 &              &              \\
   \cline{1-3}
   $h=1$     &  $m=2r-5$    &    (*)      &                    &              &                                 &              &              \\
   $p \ge 7$ &              &             &                    &              &                                 &              &              \\
\hline
  \end{tabular}

  \end{center}
  }
 \caption{ Exact values of $n(\{r,m\};g)$. The symbol (*) means results obtained in this paper.
 \label{exactv}}
\end{table}

We generalize the reduction and graph amalgam operations from \cite{AABL11} on the incidence graphs $A_q$ of an affine and $B_q$ of a biaffine plane (elliptic semiplane of type $\cal C$) obtaining two new infinite families of biregular $(\{r,m\}; 5)$--cages and two new semiregular cages of girth $5$.
The constructed families are $(\{r,2r-3\};5)$--cages for all $r=q+1$ with $q$ a prime power,
and $(\{r,2r-5\};5)$--cages for all $r=q+1$ with $q$ a prime. The new semiregular cages are constructed for $r=5$ and $6$ with $31$ and $43$ vertices respectively.

The paper is organized as follows: the graphs $A_q$ and $B_q$ are presented in Section \ref{prel}, with a labelling which
will be necessary for the construction of biregular cages.
In Section \ref{Consr2r_3}, we construct previously unknown $(\{r,2r-3\};5)$--cages, for a prime power $q \ge 2$ and the integer $r=q+1$, in Theorem \ref{r-2r-3-5-cage}, adding edges to the graph $A_q$. Moreover if $r-1$ is even, we exhibit $r$ non-isomorphic such $(\{r,2r-3\};5)$--cages in Theorem \ref{non-isomorphic-r-2r-3-5-cage}. In particular, we find a $(\{4,5\};5)$--semiregular cage with $21$ vertices. In Section \ref{Operations} we slightly generalize reduction and amalgam operations, described in \cite{AABL11} and \cite{F10}, that, performed on the bipartite graph $B_q$, will allow us to construct new $(\{r,2r-5\};5)$--cages, for $r=q+1$ with $q \ge 7$ prime, in Section \ref{Consr2r_5}, Theorems \ref{r2r-5c3} and \ref{r2r-5c1}.
Finally, in Section \ref{Small} we construct two new semiregular cages, namely a $(\{5,6\};5)$-cage with $31$ vertices and a $(\{6,7\};5)$-cage with $43$ vertices. Note that the latter is a sporadic example in which we adapt
and slightly generalize the techniques that we have used in Section \ref{Operations}.

\section{Preliminaries}\label{prel}
Let $q=p^n \ge 2$ be a prime power and $\alpha$ a primitive $n^{th}$--root of unity. Consider the finite field
$GF(q) = \{ 0, 1, \alpha, \alpha^2, \ldots, \alpha^{q-2}\}$ and denote $GF^*(q)=GF(q) \setminus \{0\}$.

The graphs constructed in this paper arise from the (bipartite)
incidence graph $B_q$ of an elliptic semiplane of type $C$ (cf. \cite{Demb,AFLN3,F10})
together with the (bipartite) incidence graph $A_q$ of the affine plane of order $q$.
We fix a labelling on their vertices which will be central for our constructions since it
allows us to keep track of the properties (such as regularity and girth) of the graphs obtained from $B_q$ and $A_q$ applying some operations such as
reductions and amalgams (cf. Sections \ref{Consr2r_3}, \ref{Operations}).

\begin{definition}\label{BqHq}

Let $q\ge 2$ be a prime power, and consider the finite field $GF(q)$.

\begin{enumerate}[(i)]
  \item Let $B_q$ be a bipartite graph with vertex set
$(V_0,V_1)$ where $V_r=GF(q)\times GF(q)$, $r=0,1$; and the
edge set defined as follows:
\begin{equation}\label{Bq}
(x,y)_0\in V_0 \mbox{ adjacent to } (m,b)_1 \in V_1  \mbox{ if and only if } y=mx+b.
\end{equation}

  \item Let $A_q$ be the graph obtained from $B_q$ by adding the following set $L_q:= \{(q,x)_1 \,| \, x\in GF(q)\}$ of $q$ vertices and the set $E_q:= \{uv \, | \, u:=(q,x)_1 \,, v:=(x,y)_0 \, \mbox{\em and} \, x,y \in GF(q)\}$ of $q^2$ edges.
\end{enumerate}
\end{definition}

The graph $B_q$ is also known  as the incidence graph of the biaffine
plane \cite{H04} and the graph $A_q$ is the incidence graph of an affine plane of order $q$.
The graph $B_q$ has been used in the problem of finding extremal graphs without short cycles (cf. e.g. \cite{AABL11, AFLN1, AFLN2, ABH10, AB11, AGMS07,  LU95}).

The following properties of the  graph $B_q$  are well known (see
\cite{AABL11,H04,LU95}) and they will be fundamental throughout the paper.

\begin{proposition}\label{BqProp}

Let $B_q$ be the (bipartite) incidence graph  defined above. Let
$P_x=\{(x,y)_0 |\ y\in GF(q)\}$, for $x\in GF(q)$, and $L_m=
\{(m,b)_1 |\ b\in GF(q)\}$, for $m\in GF(q)$. Then the graph $B_q$ has
the following properties:

\begin{enumerate}[(i)]
\item it is $q$--regular, vertex transitive, of order $2q^2$ and
has girth $6$ for $q\ge 3$;
\item it admits a partition
$\displaystyle V_0 =\bigcup_{x\in GF(q)} P_x$ and $\displaystyle V_1 =\bigcup_{m\in GF(q)} L_m$ of its vertex set;

\item each block $P_x$ is connected to each block $L_m$ by a perfect matching, for $x,m \in GF(q)$;

\item each vertex in $P_0$ and $L_0$ is connected {\em straight}
to all its neighbours in $B_q$, meaning that $N((0,y)_0)=\{(i,y)_1
| i \in GF(q) \}$ and   $N((0,b)_1)=\{(j,b)_0 | j \in GF(q) \}$;
\item the other matchings between $P_x$ and $L_m$ are {\em
twisted} and the rule is defined algebraically in $GF(q)$
according to (\ref{Bq}).
\end{enumerate}
\end{proposition}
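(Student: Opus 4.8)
The plan is to derive all five properties directly from the defining incidence relation (\ref{Bq}), $y=mx+b$, treating (ii)--(v) as essentially bookkeeping and concentrating the real work on vertex-transitivity and on pinning the girth down to exactly $6$. First I would dispatch the elementary items. For regularity, fixing a point $(x,y)_0$ its neighbours are the pairs $(m,b)_1$ with $b=y-mx$, one for each of the $q$ choices of $m$; dually a line $(m,b)_1$ has the $q$ neighbours $(x,mx+b)_0$. Hence $B_q$ is $q$--regular, and since $|V_0|=|V_1|=q^2$ it has order $2q^2$. The partition in (ii) is immediate because the first coordinate of a point (resp.\ of a line) is constant on each $P_x$ (resp.\ $L_m$) and ranges over all of $GF(q)$. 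For (iii), between a fixed $P_x$ and a fixed $L_m$ the rule $b=y-mx$ is a bijection $y\leftrightarrow b$, so the induced edges form a perfect matching. Specialising to $x=0$ gives $b=y$ and to $m=0$ gives $y=b$, which is exactly the \emph{straight} description in (iv); for $x,m\neq0$ the same bijection is the affine shift $y\mapsto y-mx$, the \emph{twisted} rule of (v).

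Next I would establish vertex-transitivity by exhibiting enough automorphisms. Three families suffice to act transitively on each side while preserving the bipartition: the vertical translations $\sigma_c:(x,y)_0\mapsto(x,y+c)_0,\ (m,b)_1\mapsto(m,b+c)_1$; the horizontal translations $\tau_a:(x,y)_0\mapsto(x+a,y)_0,\ (m,b)_1\mapsto(m,b-ma)_1$; and the shears $\rho_s:(x,y)_0\mapsto(x,y+sx)_0,\ (m,b)_1\mapsto(m+s,b)_1$. In each case a one-line substitution into $y=mx+b$ confirms that adjacency is preserved, and composing them moves $(0,0)_0$ to any point and $(0,0)_1$ to any line. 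Finally the duality $\phi:(x,y)_0\mapsto(x,-y)_1,\ (m,b)_1\mapsto(m,-b)_0$ is an involution interchanging $V_0$ and $V_1$ (one checks that $-b=xm-y$ rearranges to $y=mx+b$), so together these maps act transitively on all of $V(B_q)$.

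The decisive step is the girth. Being bipartite, $B_q$ has even girth, so it remains to rule out $4$--cycles and to produce a $6$--cycle when $q\ge3$. A $4$--cycle would force two distinct points to lie on two distinct common lines; but if $(x_1,y_1)_0$ and $(x_2,y_2)_0$ both satisfy $y_i=mx_i+b$ for two pairs $(m,b)$, subtracting the two line equations shows either $x_1=x_2$ (whence $y_1=y_2$, a contradiction) or that $m$ and then $b$ are uniquely determined, so at most one common line exists and no $4$--cycle occurs. For the upper bound I would exhibit three lines with distinct slopes that are pairwise non-parallel and non-concurrent, say $y=m_1x$, $y=m_2x$ and $y=m_3x+b$ with $m_1,m_2,m_3$ distinct and $b\neq0$; their three pairwise intersections are distinct points and yield a hexagon. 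This is exactly where the hypothesis $q\ge3$ enters, since three distinct slopes exist only then --- consistent with $B_2\cong C_8$. I expect this last point, matching the explicit $6$--cycle to the threshold $q\ge3$ (and checking that the relevant intersection points are genuine vertices rather than lying on a missing vertical line), to be the only place requiring care; everything else reduces to direct substitution into (\ref{Bq}).
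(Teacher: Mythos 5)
Your proof is correct, but it is worth knowing that the paper itself offers no proof of this proposition at all: it is stated as a collection of well-known facts with a pointer to the references \cite{AABL11}, \cite{H04} and \cite{LU95}, where $B_q$ appears as the incidence graph of a biaffine plane (elliptic semiplane). So your argument is not a variant of the paper's proof but a self-contained replacement for a citation. As such it does everything needed: items (ii)--(v) and $q$--regularity do reduce, as you say, to the observation that for fixed $x$ and $m$ the relation $b=y-mx$ is a bijection between second coordinates, which simultaneously gives the perfect matchings of (iii) and their straight/twisted descriptions in (iv)--(v). Your automorphisms $\sigma_c$, $\tau_a$, $\rho_s$ all check out against $y=mx+b$, and the duality $\phi:(x,y)_0\mapsto(x,-y)_1$, $(m,b)_1\mapsto(m,-b)_0$ is indeed an adjacency-preserving involution swapping the sides, so the group generated acts transitively on $V_0\cup V_1$; this is the one part that genuinely requires an argument beyond substitution, and yours is complete. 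For the girth, your two halves are both sound: the subtraction argument shows two distinct points lie on at most one common line (no $4$--cycles, and bipartiteness excludes odd cycles), and the triangle formed by $y=m_1x$, $y=m_2x$, $y=m_3x+b$ with $m_1,m_2,m_3$ distinct and $b\neq0$ has three distinct pairwise intersection points in $GF(q)\times GF(q)$ (one at the origin, two with nonzero abscissae $b/(m_1-m_3)\neq b/(m_2-m_3)$), yielding a genuine $6$--cycle exactly when $q\ge3$; your remark that $B_2\cong C_8$ correctly explains why the threshold is sharp. The only caveat I would add is expository: since the intersection computations use division by $m_i-m_3$, it is worth saying explicitly that this is where the field structure of $GF(q)$ (as opposed to a mere ring) is used, but this does not affect correctness.
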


For further information regarding these properties and for constructions of the adjacency matrix
of $B_q$ as a block $(0,1)$--matrix please refer to \cite{ABL10, AFLN3, B08}.

\section{Construction of a family of $(\{r,2r-3\};5)$--cages.}\label{Consr2r_3}

\noindent In this section, for a prime power $q \ge 2$ and the integer $r=q+1$, we construct previously unknown $(\{r,2r-3\};5)$--cages, adding edges to the graph $A_q$ presented in Section \ref{prel}.

Let $q\ge2$ be a prime power and let $r=q+1$. We define $R_q$ to be the graph with $V(R_q):=V(A_q)$ and $E(R_q):=E(A_q) \cup D$ where
$D= \{(m,0)_1(m,b)_1 \,| \, b\in GF^{*}(q) \mbox{ and } m\in GF(q) \cup \{q\}\}.$

\begin{theorem}\label{r-2r-3-5-cage}
Let $q\ge2$ be a prime power and let $r=q+1$. Then the graph $R_q$ is a $(\{r,2r-3\};5)$--cage satisfying Downs' bound, i.e. $n(\{r,2r-3\};5)=r(2r-3) + 1$.
\end{theorem}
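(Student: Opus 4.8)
The plan is to verify directly that $R_q$ is an $(\{r,2r-3\};5)$--graph on exactly $r(2r-3)+1$ vertices, and then to invoke the lower bound (\ref{Dcota}) to conclude minimality. The set $D$ consists, for each of the $q+1$ parallel classes $L_m$ (with $m\in GF(q)\cup\{q\}$), of the $q-1$ edges joining the distinguished line $(m,0)_1$ to every other line $(m,b)_1$ of the same class; thus $D$ induces on $V_1$ a disjoint union of $q+1$ stars $K_{1,q-1}$, pairwise vertex--disjoint since distinct parallel classes share no line. As $A_q$ is the incidence graph of the affine plane of order $q$, every point $(x,y)_0$ has degree $q+1=r$ and every line has degree $q$; adding $D$ leaves the points untouched, raises each centre $(m,0)_1$ to degree $q+(q-1)=2q-1=2r-3$, and each non--centre line to degree $q+1=r$. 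Hence the degree set of $R_q$ is $\{r,2r-3\}$. The order is unchanged, $|V(R_q)|=|V(A_q)|=2q^2+q$, and substituting $r=q+1$ gives $2q^2+q=r(2r-3)+1$, which is exactly the right--hand side of (\ref{Dcota}) for $g=5$ (i.e. $t=2$, $a_1=r$, $a_k=2r-3$).

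The core of the argument is to show that $R_q$ has girth exactly $5$. Here I would exploit that $A_q$ is bipartite with parts $(V_0,V_1)$ and girth $6$, while every edge of $D$ lies inside $V_1$: traversing a cycle, an edge of $A_q$ switches side and an edge of $D$ does not, so the number of $A_q$--edges is even and the cycle length has the same parity as the number of $D$--edges it uses. A triangle would use an odd number of $D$--edges; three is impossible since $D$ induces a forest, and a single $D$--edge $(m,0)_1(m,b)_1$ closed by a path of length $2$ through a point would force the two parallel lines $(m,0)_1,(m,b)_1$ to share a point, which they cannot.

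The $4$--cycle case is the main obstacle and needs a short case analysis on the (even) number of $D$--edges used: zero is excluded by the girth of $A_q$, four by the forest structure of $D$, and for two $D$--edges one first notes they cannot be opposite edges of the cycle (an $A_q$--edge never joins two lines), so they are adjacent; their common vertex must then be the centre of a star (a leaf has star--degree $1$), the two other end--lines are parallel, and the fourth vertex would be a point common to both of them, which is impossible. Finally a $5$--cycle does exist: fixing a class, take the $D$--edge $(m,0)_1(m,b)_1$ with $b\neq0$ and any line $\ell$ of a different slope; then $\ell$ meets $(m,0)_1$ and $(m,b)_1$ in two distinct points $p_1,p_2$, and $(m,0)_1\,p_1\,\ell\,p_2\,(m,b)_1$ closed by the $D$--edge is a $5$--cycle. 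This settles that the girth is exactly $5$.

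Putting these together, $R_q$ is an $(\{r,2r-3\};5)$--graph with $r(2r-3)+1$ vertices, so by (\ref{Dcota}) it has minimum possible order and is therefore an $(\{r,2r-3\};5)$--cage meeting Downs' bound. I expect the only delicate point to be the exhaustive (but short) verification that no $4$--cycle survives the addition of $D$; everything else reduces to routine counting together with the elementary incidence facts that two parallel lines never meet and two distinct lines meet in at most one point.
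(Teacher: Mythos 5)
Your proposal is correct, and its overall skeleton coincides with the paper's proof of Theorem \ref{r-2r-3-5-cage}: verify the degree set $\{r,2r-3\}$, compute the order $2q^2+q=r(2r-3)+1$, show the girth is exactly $5$, and conclude minimality from the bound (\ref{Dcota}). The difference is in how the girth is established. The paper argues that any cycle $C$ not contained in $A_q$ uses an edge $xy\in D$ with $x,y\in L_m$, and since the bipartition and the girth $6$ of $A_q$ force $d_{A_q}(x,y)=4$, the cycle has length at least $5$; it then exhibits one explicit $5$--cycle. As literally stated, that distance argument only disposes of cycles containing \emph{exactly one} edge of $D$ (if $C$ uses several $D$--edges, $C-xy$ need not be a path in $A_q$), and the paper leaves the multi--$D$--edge cases implicit. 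Your parity argument (cycle length $\equiv$ number of $D$--edges mod $2$) together with the case analysis on $0,2,3,4$ $D$--edges --- using that $D$ induces vertex--disjoint stars, that no $A_q$--edge joins two vertices of $V_1$, and that parallel lines share no point --- closes exactly this gap, so your proof is in fact the more complete one on the single delicate point; the price is a longer case discussion. Your generic $5$--cycle (a $D$--edge plus a transversal line of different slope) also works for every $q\ge 2$ and is a clean substitute for the paper's concrete cycle $(q,0)_1(q,1)_1(1,0)_0(0,0)_1(0,0)_0(q,0)_1$. One could streamline your $4$--cycle case even further by noting that two $D$--edges in a $4$--cycle are necessarily adjacent (the opposite-edge configuration dies instantly on the bipartition), but as written the argument is sound.
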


\begin{proof}
The vertices $M:=\{(m,0)_1 | m\in GF(q) \cup \{q\} \} \subset V(R_q)$ have degree $q+(q-1)=2q-1=2r-3$, and the remaining vertices
of $R_q$ have degree $q+1=r$.

By construction $B_q \subset A_q \subset R_q$. Let $C$ be a cycle in $R_q$.
If the edges of $C$ are totally contained in $A_q$ then the length of $C$ is at least six, since $A_q$ is the incidence graph of an affine plane. Otherwise, $C$ contains at least an edge  $e=xy \in D$, where $x,y \in L_m$ and $m \in GF(q) \cup \{q\}$. Then it follows from the bipartition and girth of $A_q$ that the distance $d_{A_q}(x,y)=4$. Thus the length of $C$ in this case is at least five and exactly five if $C$ contains exactly one edge of $D$. Hence, the graph $R_q$ has girth $5$ since $C=(q,0)_1(q,1)_1(1,0)_0(0,0)_1(0,0)_0(q,0)_1$ where $(q,0)_1(q,1)_1$ is the only edge of $C$ in $D$.

Finally, $|V(R_q)|=|V(A_q)|=2q^{2}+q=2r^{2}-3r+1=r(2r-3)+1$.\end{proof}

\begin{corollary}\label{r,r+1case}
The graph $R_q$ is a semi--regular cage if and only if $r=4$.
\end{corollary}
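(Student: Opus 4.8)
The plan is to unwind the definitions and reduce the claim to a single linear equation. By Theorem \ref{r-2r-3-5-cage}, the graph $R_q$ is an $(\{r,2r-3\};5)$--cage, so its degree set is $\{r,2r-3\}$. Recall from the Introduction that an $(\{r,m\};g)$--cage is \emph{semiregular} precisely when $m=r+1$. Applying this with $m=2r-3$, the graph $R_q$ is a semiregular cage if and only if
\[
2r-3 = r+1,
\]
which is equivalent to $r=4$. This establishes both directions of the biconditional at once.

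It then remains only to verify that $r=4$ is consistent with the hypotheses of Theorem \ref{r-2r-3-5-cage}, namely that $r=q+1$ for some prime power $q$. Indeed, $r=4$ corresponds to $q=3$, which is a prime and hence a prime power, so the construction of $R_3$ is well defined. In this case $2r-3 = 5 = r+1 > r$, so the two degrees are genuinely distinct and differ by exactly one, confirming that $R_3$ is both biregular and semiregular; for every other admissible $r=q+1$ we have $2r-3 \neq r+1$, so $R_q$ fails to be semiregular.

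There is no essential obstacle here: the corollary is a direct consequence of the degree classification established in the proof of Theorem \ref{r-2r-3-5-cage} together with the definition of semiregularity. The only point requiring a momentary check is the arithmetic consistency of $r=4$ with the constraint $r=q+1$, $q$ a prime power; since $q=3$ qualifies, the conclusion follows. Finally, substituting $r=4$ into the order formula $n(\{r,2r-3\};5)=r(2r-3)+1$ yields $4\cdot 5 + 1 = 21$, recovering the $(\{4,5\};5)$--semiregular cage on $21$ vertices mentioned in Section \ref{intro}.
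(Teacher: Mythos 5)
Your proof is correct and takes essentially the same route as the paper, whose entire argument is the one-line observation that $2r-3=r+1$ if and only if $r=4$. The additional checks you include (that $r=4$ arises from the prime power $q=3$, and that the order is $21$) are fine but not needed, since $R_q$ is only ever defined for prime powers $q$.
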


\begin{proof}
It follows immediately since $2r-3=r+1$ if and only if $r=4$. \end{proof}

Note that an isomorphic graph to $R_4$ has been found also in \cite{HWJ92} .

\noindent In a similar way, for $q$ even, we construct a family of non--isomorphic  $(\{r,2r-3\};5)$--cages.

Let $q$ be an even prime power and let $D_m := \{(m,0)_1(m,b)_1 \, | \, b \in GF(q)\}$ and
$F_m := \{ (m,0)_1(m,1)_1 \} \cup \{(m, \alpha^i)_1(m, \alpha^{i+1})_1 \, | \, 1 \le i \le q-3, \, i \, \mbox{odd}\}$, for $m \in GF(q) \cup \{q\}$.
Then $D_m  \cong K_{1,q-1}$ is a star with vertex set $L_m$ and $F_m$ is a matching between the vertices of $L_m$.

Let $0 \le t \le q-1$ and let $I_t=\{\alpha^{q-t-1}, \ldots , \alpha^{q-2}\}$ be a set of indexes.
We define $G_t$ to be the graph with $V(G_t):=V(A_q)$ and $E(G_t):=E(A_q) \cup F_t \cup D_t$, where
$$F_t:=\bigcup_{m \, \in \, GF(q) \setminus I_t}F_m \quad \mbox{ and } \quad D_t:=\bigcup_{m \, \in \,  I_t \cup \{q\}}D_m.$$

Note that the graph $G_t$ is obtained from the graph $A_q$ adding $t+1$ stars and $q-t$ matchings within the sets $L_m$.
In particular, for $t=0$ the index set $I_t= \emptyset$ and the only star added to $A_q$ is the one in $L_q$.
Moreover, if we set $I_q:=GF(q)$, then we can say that the graph $G_q:=R_q$.

\begin{theorem}\label{non-isomorphic-r-2r-3-5-cage}
Let $q=2^{s}$ be an even prime power, with $s\ge1$. Then there are at least $q+1$ non--isomorphic $(\{r,2r-3\};5)$--cages.
\end{theorem}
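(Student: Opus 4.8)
The plan is to exhibit the $q+1$ graphs $G_0,G_1,\dots,G_{q-1}$ together with $G_q:=R_q$ as pairwise non--isomorphic $(\{r,2r-3\};5)$--cages. The argument splits into showing that each $G_t$ is a cage and then distinguishing the graphs by a single degree invariant.

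First I would check that every $G_t$ with $0\le t\le q$ is a $(\{r,2r-3\};5)$--graph of order $r(2r-3)+1$. Since $V(G_t)=V(A_q)$, the order is $|V(A_q)|=2q^2+q=r(2r-3)+1$, exactly the value computed in Theorem \ref{r-2r-3-5-cage}. For the degrees, recall that every vertex of $V_1$ has degree $q$ in $A_q$. Adding the perfect matching $F_m$ inside a block $L_m$ raises each of its $q$ vertices to degree $q+1=r$, whereas adding the star $D_m$ raises its centre $(m,0)_1$ to degree $q+(q-1)=2r-3$ and each of its leaves to degree $r$; here the hypothesis that $q$ is even is precisely what guarantees that $F_m$ is a perfect matching, so that the matched blocks contribute no vertex of degree larger than $r$. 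Consequently the degree set of $G_t$ is $\{r,2r-3\}$, and the number of vertices of degree $2r-3$ equals the number of stars added, namely $|I_t\cup\{q\}|=t+1$.

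Next I would prove $g(G_t)=5$, along the lines of Theorem \ref{r-2r-3-5-cage}. Every edge of $E(G_t)\setminus E(A_q)$ joins two vertices of one common block $L_m\subset V_1$, and by the bipartition and girth of $A_q$ any two vertices of $L_m$ are at distance $4$ in $A_q$. Hence a cycle containing no new edge has length at least $6$, while a cycle containing exactly one new edge $xy$ closes through an $A_q$--path of length at least $d_{A_q}(x,y)=4$ and so has length at least $5$; moreover the explicit $5$--cycle produced in Theorem \ref{r-2r-3-5-cage} uses only the star $D_q$ (present in every $G_t$, since $q\in I_t\cup\{q\}$) together with edges of $A_q$, so the girth is not larger than $5$. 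The step requiring care is to rule out short cycles built from two or more new edges: for this I would use that $A_q$ is bipartite with parts $V_0,V_1$ and that every new edge stays inside $V_1$, so any cycle traverses an even number of $A_q$--edges, and then combine this parity with the distance data---namely that two vertices of the same block are at $A_q$--distance $4$ while two vertices of distinct blocks are at $A_q$--distance at least $2$, together with the fact that within a block the new edges form only a matching or a star---to conclude that any cycle using at least two new edges has length at least $6$. I expect this case to be the only genuinely delicate point of the proof.

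Finally, the non--isomorphism is immediate. By the degree count above, $G_t$ has exactly $t+1$ vertices of maximum degree $2r-3$, and as $t$ runs through $0,1,\dots,q$ these counts are the $q+1$ distinct values $1,2,\dots,q+1$. Since the number of vertices of a prescribed degree is an isomorphism invariant, the graphs $G_0,\dots,G_{q-1},R_q$ are pairwise non--isomorphic. Combined with Theorem \ref{r-2r-3-5-cage}, which gives $n(\{r,2r-3\};5)=r(2r-3)+1$ and thus certifies that each of them is a cage, this yields at least $q+1$ non--isomorphic $(\{r,2r-3\};5)$--cages.
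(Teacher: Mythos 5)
Your proposal is correct and takes essentially the same route as the paper: the paper's proof also considers the family $G_0,\dots,G_{q-1}$ together with $G_q=R_q$, establishes order, degree set and girth by ``reasoning as in Theorem \ref{r-2r-3-5-cage}'', and distinguishes the graphs by the isomorphism invariant you use, namely that $G_t$ has exactly $t+1$ vertices of degree $2r-3$. The only difference is one of detail: you make explicit the degree bookkeeping (including why evenness of $q$ makes $F_m$ a perfect matching) and the girth analysis for cycles containing two or more added edges, steps which the paper leaves implicit in its appeal to Theorem \ref{r-2r-3-5-cage}.
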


\begin{proof}
Let $G_t$ be the family of graphs defined as above, for $0\le t\leq q$.
Reasoning as in Theorem \ref{r-2r-3-5-cage}, is follows that, for every $0 \le t \le q-1$, the graphs $G_t$ have girth $5$, order $r(2r-3)+1$ and
are biregular with $t+1$ vertices of degree $2r-3$. Thus, $G_t$ is a $(\{r,2r-3\};5)$--cage for every $0 \le t \le q-1$.
Moreover, $G_i \ncong G_j$, for $i,j \in GF(q) \cup \{q\}$ with $i \ne j$, since they have a different number of vertices of degree $2r-3$.\end{proof}

\section{Operations on $B_q$} \label{Operations}

In this section we slightly generalize reduction and amalgam operations, described in  \cite{AABL11} and  \cite{F10},
that, performed on the bipartite graph $B_q$, will allow us to construct new $(\{r,2r-5\};5)$--cages, for $r=q+1$ with $q \ge 7$ prime, in Section \ref{Consr2r_5}.

\subsection{Reductions}\label{reduction}

We describe two reduction operations on $B_q$ already introduced in \cite{AABL11}.
The first one is exactly the same while the second one is slightly generalized.

\noindent{\sc Reduction 1}\cite{AABL11}  Remove vertices from $P_0$ and $L_0$.

Let $T \subseteq S \subseteq GF(q)$, $S_0 = \{(0,y)_0 | y \in S\}
\subseteq P_0$, $T_0 = \{(0,b)_1 | b \in T\}  \subseteq L_0$ and
$B_q(S,T)=B_q-S_0-T_0$.

\begin{lemma}\label{Red1}
Let $T \subseteq S \subseteq GF(q)$. Then $B_q(S,T)$ is
biregular with degrees $(q-1,q)$ of order $2q^2-|S|-|T|$.
Moreover, the vertices $(i,t)_0 \in V_0$ and $(j,s)_1 \in V_1$,
for each $i,j \in GF(q) -\{0\}$, $s \in S$ and $t \in T$ are the
only vertices of degree $q-1$ in $B_q(S,T)$, together with
$(0,s)_1 \in V_1$ for $s \in S-T$ if $T \subsetneq S$.
\end{lemma}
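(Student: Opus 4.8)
The plan is to track how the removal of the vertices in $S_0 \subseteq P_0$ and $T_0 \subseteq L_0$ affects the degrees of the remaining vertices of $B_q$, using crucially property (iv) of Proposition \ref{BqProp}: the vertices of $P_0$ and $L_0$ are connected \emph{straight} to their neighbours. The key observation is that a vertex $(0,y)_0 \in P_0$ has neighbourhood $N((0,y)_0)=\{(i,y)_1 \mid i \in GF(q)\}$, so its neighbours are exactly those vertices of $V_1$ whose second coordinate equals $y$; symmetrically for $(0,b)_1 \in L_0$.

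\emph{First} I would compute the order. Since $B_q$ has $2q^2$ vertices by Proposition \ref{BqProp}(i) and we delete the $|S|$ vertices of $S_0$ together with the $|T|$ vertices of $T_0$ (these two sets being disjoint as one lies in $V_0$ and the other in $V_1$), the order is $2q^2-|S|-|T|$. \emph{Next}, for the degrees, I would argue as follows. Every vertex of $B_q$ originally has degree $q$. A surviving vertex loses exactly one unit of degree for each deleted neighbour. Consider a vertex $(j,s)_1 \in V_1$ with $j \in GF(q)\setminus\{0\}$ and $s \in S$: among its $q$ neighbours there is exactly one vertex lying in $P_0$, namely the unique $(0,y)_0$ adjacent to it, and by the straight-line rule that neighbour is $(0,s)_0$; since $s \in S$ this vertex has been removed, so $(j,s)_1$ drops to degree $q-1$. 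The same computation, with the roles of the two sides exchanged, shows that $(i,t)_0 \in V_0$ with $i \ne 0$ and $t \in T$ has its unique neighbour $(0,t)_1 \in L_0$ removed (since $t \in T$) and hence has degree $q-1$. All other vertices with $i,j\ne 0$ retain degree $q$, because their unique neighbour in $P_0$ (resp.\ $L_0$) has second coordinate outside $S$ (resp.\ $T$) and so survives.

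\emph{The step requiring the most care} is the analysis of the surviving vertices of $L_0$ itself, i.e.\ $(0,b)_1$ with $b \notin T$. Such a vertex has neighbourhood $N((0,b)_1)=\{(j,b)_0 \mid j \in GF(q)\}$; the only possibly-deleted neighbour is $(0,b)_0 \in P_0$, which is removed precisely when $b \in S$. Hence $(0,b)_1$ has degree $q-1$ exactly when $b \in S\setminus T$, and degree $q$ when $b \notin S$. This accounts for the extra clause in the statement, namely the vertices $(0,s)_1$ for $s \in S\setminus T$, which arise only in the case $T \subsetneq S$. A symmetric check confirms the surviving vertices $(0,y)_0 \in P_0$ with $y \notin S$: each has $(0,y)_1$ as its only candidate deleted neighbour, and since $y \notin S \supseteq T$ forces $y \notin T$, that neighbour survives, so every remaining vertex of $P_0$ keeps degree $q$. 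Assembling these cases shows that the vertices of degree $q-1$ are exactly those listed and all others have degree $q$, so $B_q(S,T)$ is $(q-1,q)$-biregular, completing the proof. The main subtlety throughout is simply keeping the straight-line incidence rule of property (iv) straight and correctly distinguishing the asymmetric roles of $S$ and $T$.
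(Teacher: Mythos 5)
Your proof is correct and is essentially the paper's argument made explicit: the paper disposes of this lemma in one line as an ``immediate consequence'' of the structural properties of $B_q$ in Proposition \ref{BqProp}, and your careful degree-tracking (each surviving vertex loses one unit of degree per deleted neighbour, located via the straight matchings at $P_0$ and $L_0$) is precisely the verification the paper leaves to the reader. The only cosmetic difference is that you invoke property $(iv)$ while the paper cites $(i)$ and $(v)$; the content is the same.
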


\begin{proof}
It is an immediate consequence of Proposition \ref{BqProp} $(i)$, $(v)$.
\end{proof}

\noindent{\sc Reduction 2} Remove blocks $P_i$ and $L_j$ from $B_q$ or from $B_q(S,T)$.

\noindent

Let $u_0,u_1$ be non--negative integers such that $0 \le u_0 \le u_1 < q-1$. If $u_i > 0$, let $U_i := \{ \alpha^{q-j} \in GF(q) :  j=2, \ldots, u_i+1 \}$ be an index set, for $i=0,1$. Let $\mathcal U_0$ and $\mathcal U_1$ be sets of blocks of $B_q$ chosen as follows:
$$
\mathcal U_0 := \{P_x\subset V_0: x \in U_0\}  \mbox{ if }  1 \le u_0 \le q-1
\quad \quad \mbox{ or } \quad \quad  \mathcal U_0 := \emptyset \mbox{ if } u_0=0
$$
$$
\mathcal U_1 := \{L_m\subset V_1: m \in U_1\}  \mbox{ if }  1 \le u_1 \le q-1
\quad \quad \mbox{ or } \quad \quad  \mathcal U_1 := \emptyset \mbox{ if } u_1=0.
$$
Then, we define $B_q(u_0,u_1):=B_q- \mathcal U_0-\mathcal U_1$ to be the graph obtained from $B_q$ by deleting the last $u_0$ blocks
of $V_0$, and the last $u_1$ blocks of $V_1$. Analogously, we define $B_q(S,T,u_0,u_1):=B_q-(S_0\cup T_0)-(\mathcal U_0\cup \mathcal U_1)$
to be the graph obtained from the graph $B_q(S,T)$ obtained with Reduction 1. Clearly, for $u_0=u_1=0$, $B_q(0,0)=B_q$ and $B_q(S,T,0,0)=B_q(S,T)$.

\begin{lemma}\label{Red2}
Let  $u_0,u_1$ be non--negative integers, with $0 \le u_0 \le u_1 < q-1$. Then
\begin{enumerate}[(i)]
\item the graph $B_q(u_0,u_1)$ is a biregular graph with degrees $\{q-u_0,q-u_1\}$ of order $2q^2-q(u_0+u_1)$ if $u_0 \ne u_1$ ;

\item the graph $B_q(u_0,u_1)$ is a $(q-u_0)$--regular graph of order $2q(q-u_0)$ if $u_0=u_1$ ;

\item the graph $B_q(S,T,u_0,u_1)$ has degrees $\{q-u_0,q-u_1,q-u_0-1,q-u_1-1\}$ and order $2q^{2}-q(u_0+u_1)-|S|-|T|$.
Moreover, the vertices $(i,t)_0\in V_0$ with  $i\in GF(q)-U_0$ and $t\in T$ are the only vertices of degree $q-u_1-1$ in $B_q(S,T,u_0,u_1)$
and the vertices $(j,s)_1\in V_1$ with $j\in GF(q)-U_1$ and $s\in S$, together with the vertices $(0,s)_1 \in V_1$, for $s \in S-T$ if $T \subsetneq S$,
are the only vertices of degree $q-u_0-1$ in $B_q(S,T,u_0,u_1)$;
\end{enumerate}
\end{lemma}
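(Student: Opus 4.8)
The plan is to derive everything from Proposition~\ref{BqProp}, reducing the statement to a vertex-by-vertex degree count together with a disjoint-union count of the deleted vertices. The two structural facts I would lean on are: (a) by Proposition~\ref{BqProp}(iii) every vertex of $B_q$ has exactly one neighbour in each opposite block, so a vertex loses precisely one neighbour for each opposite block that is deleted; and (b) by Proposition~\ref{BqProp}(iv) the blocks $P_0$ and $L_0$ are joined \emph{straight}, so the unique neighbour of $(x,y)_0$ in $L_0$ is $(0,y)_1$ and the unique neighbour of $(m,b)_1$ in $P_0$ is $(0,b)_0$. First I would record the preliminary observation that $0 \notin U_0 \cup U_1$: indeed $U_i = \{\alpha^{q-j} : 2 \le j \le u_i+1\}$ has smallest exponent $q-u_i-1 \ge 1$ because $u_i < q-1$, so neither $P_0$ nor $L_0$ is ever among the deleted blocks. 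Consequently the vertex deletions of Reduction~1 (which lie in $P_0 \cup L_0$) and the block deletions of Reduction~2 never overlap, and their effects on degrees are independent and additive.

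For parts (i) and (ii) only blocks are removed. Applying (a), every surviving vertex of $V_0$ loses exactly one neighbour for each of the $u_1$ deleted blocks $L_m$ ($m \in U_1$), hence has degree $q-u_1$; symmetrically every surviving vertex of $V_1$ has degree $q-u_0$. This gives a biregular graph with degrees $\{q-u_0,q-u_1\}$ when $u_0 \ne u_1$, and a $(q-u_0)$--regular graph when $u_0=u_1$. The order follows by counting the deleted vertices: the $u_0$ blocks of $V_0$ and the $u_1$ blocks of $V_1$ are disjoint and each has $q$ vertices, so $|V(B_q(u_0,u_1))| = 2q^2 - q(u_0+u_1)$, which equals $2q(q-u_0)$ in the regular case.

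For part (iii) I would combine both reductions. Starting from the degrees just computed, I add the effect of deleting $S_0 \subseteq P_0$ and $T_0 \subseteq L_0$. By (b), removing $S_0$ only lowers degrees on the $V_1$ side: a surviving vertex $(m,b)_1$ loses its straight neighbour $(0,b)_0$ exactly when $b \in S$, so its degree drops from $q-u_0$ to $q-u_0-1$ precisely in that case; dually, a surviving vertex $(x,y)_0$ has degree $q-u_1-1$ exactly when $y \in T$, and $q-u_1$ otherwise. This shows every degree lies in $\{q-u_0,\,q-u_1,\,q-u_0-1,\,q-u_1-1\}$. To pin down the explicit low-degree vertices I then intersect these conditions with the surviving vertex set: $(x,t)_0$ with $t \in T$ survives iff $x \notin U_0$ and $x \ne 0$ (the vertices $(0,t)_0$ with $t\in T\subseteq S$ lie in $S_0$ and are deleted), giving the degree-$(q-u_1-1)$ vertices; while $(j,s)_1$ with $s \in S$ survives with degree $q-u_0-1$ when either $j \notin U_1$ and $j\ne 0$, or $j=0$ and $s \in S\setminus T$ (the case $j=0$, $s\in T$ being deleted as $T_0$), which is exactly the description in the statement. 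Finally the order is obtained by noting that $\mathcal U_0,\mathcal U_1,S_0,T_0$ are pairwise disjoint (the first two by the preliminary observation; $S_0,T_0$ on opposite sides and inside the undeleted blocks $P_0,L_0$), whence $2q^2 - q(u_0+u_1) - |S| - |T|$.

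The argument is entirely bookkeeping, so there is no genuinely hard step; the only point requiring care is the identification of the minimum-degree vertices in part (iii). There one must exclude the candidates that have themselves been deleted (notably $(0,t)_0$ for $t \in T$, and $(0,s)_1$ for $s\in T$), isolate the surviving exceptional set $(0,s)_1$ with $s\in S\setminus T$, and avoid double-counting the straight $L_0$-neighbour of a $V_0$-vertex against the deleted blocks $L_m$, $m\in U_1$ — this last non-interference being guaranteed precisely by $0 \notin U_1$.
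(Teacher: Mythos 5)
Your proof is correct and takes essentially the same route as the paper, whose proof of this lemma is a one-line appeal to Proposition \ref{BqProp} and Lemma \ref{Red1}: your argument is exactly that bookkeeping carried out in full. Indeed, your handling of the delicate points (that $0 \notin U_0 \cup U_1$, so $P_0$ and $L_0$ are never deleted and the two reductions never overlap, and the exclusion of deleted candidates such as $(0,t)_0$ with $t \in T$ when identifying the minimum-degree vertices) is more explicit than in the paper.
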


\begin{proof}
It is an immediate consequence of  Proposition \ref{BqProp} $(i)$, $(v)$ and Lemma \ref{Red1}.
\end{proof}

\subsection{Amalgam}\label{amal}

In this section we describe an {\em amalgam} operation inspired by J{\o}rgensen \cite{J05}, Funk \cite{F10} and Abreu et al. \cite{AABL11},
where regular bipartite graphs were transformed into (no longer bipartite) regular graphs of higher degree adding
{\em weighted} edges with different weights on opposite sides of the bipartition.

Since we apply Reduction $1$ before increasing the degree of $B_q$,
we describe the amalgam operation performed on the reduced graph $B_q(S,T,u_0,u_1)$ for $0 \le u_0 \le u_1 < q-1$.
The labelling for $B_q$ introduced in Section \ref{prel}, will be essential, in the choice of the graphs
used for the amalgam, to guarantee the biregularity and the girth 5 in the final graph.

Let $\Gamma_1$ and $\Gamma_2$ be two graphs of the same order and with the same labels on their vertices.
In general, an {\em amalgam of $\Gamma_1$ into $\Gamma_2$} is a graph obtained adding all the edges of
$\Gamma_1$ to $\Gamma_2$.

Let $P_i$ and $L_i$ be defined as in Section \ref{prel}.
Consider the graph $B_q(S,T,u_0,u_1)$,  for some $T \subseteq S \subseteq GF(q)$ and $0 \le u_0 \le u_1 < q-1$.
Let $S_0 \subseteq P_0$, $T_0 \subseteq L_0$ as in Reduction $1$, and let $P_0':=P_0-S_0$ and $L_0':=L_0-T_0$ be
the blocks in $B_q(S,T,u_0,u_1)$ of order $q-|S|$ and $q-|T|$, respectively.

Let $H_1$, $H_2$, $G_i$, for $i=1,2$, be graphs of girth at least $5$ and order $q-|S|$, $q-|T|$ and $q$, respectively.
To simplify notation in our results, we label $P_i$ and $L_i$ as in Section \ref{prel},
but assume that the labellings of $H_1,H_2,G_1$ and $G_2$ correspond
to the second coordinates of $P_0', L_0', P_i$ and $L_j$ respectively for $i \in GF^*(q)-U_0$
and $j \in GF^*(q)-U_1$.

We define $B^*_q(S,T,u_0,u_1)$ to be the {\em amalgam} of $H_1$ into $P_0'$,
$H_2$ into $L_0'$, $G_1$ into $P_i$, for $i \in GF^*(q)-U_0$, and $G_2$ into $L_j$, for $j \in GF^*(q)-U_1$. Note that $|V(B^*_q(S,T,u_0,u_1))|=|V(B_q(S,T,u_0,u_1))|$.

The next lemma is immediate and it shows the behavior of the degree set of the graph $B^*_q(S,T,u_0,u_1)$.

\begin{lemma}\label{rgrados}
Let $G:=B^{*}_q(S,T,u_0,u_1)$. Then the degrees of the vertices of $G$ are:
\begin{enumerate}
\item $d_G((0,y)_0) = q - u_1 + d_{H_1}(y)$,
\item $d_G((0,b)_1) = q - u_0 + d_{H_2}(b)$ if $b \notin S-T$,
\item $d_G((0,b)_1) = q - u_0 - 1 + d_{H_2}(b)$ if $b \in S-T$,
\item $d_G((x,y)_0) = q - u_1 + d_{G_1}(y)$ if $y \notin T$,
\item $d_G((x,y)_0) = q - u_1 - 1 + d_{G_1}(y)$ if $y \in T$,
\item $d_G((m,b)_1) = q - u_0 + d_{G_2}(b)$ if $b \notin S$,
\item $d_G((m,b)_1) = q - u_0 - 1 + d_{G_2}(b)$ if $b \in S$,
\end{enumerate}
\end{lemma}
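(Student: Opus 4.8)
The plan is to prove Lemma \ref{rgrados} by a direct degree count, tracking how each of the three operations (Reduction 1, Reduction 2, and the amalgam) modifies the degree of a vertex relative to its degree in the original $q$--regular graph $B_q$. First I would recall from Proposition \ref{BqProp}(i) that every vertex of $B_q$ has degree exactly $q$, and then account separately for the edges \emph{removed} by the reductions and the edges \emph{added} by the amalgam. The key observation is that the two processes are independent: removing blocks and removing the sets $S_0,T_0$ only deletes edges incident to certain vertices, while the amalgam only adds edges among the second coordinates within a fixed block. So for each vertex $v$ the final degree is $q$ minus the number of deleted edges at $v$ plus the number of amalgam edges at $v$.

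The main bookkeeping splits into the natural vertex classes appearing in the seven cases. For the removed edges I would argue as follows. By Reduction 2, deleting the $u_1$ blocks $L_m$ with $m\in U_1$ removes exactly $u_1$ edges from every vertex of $V_0$ (one per deleted block, by Proposition \ref{BqProp}(iii) each block meets each $L_m$ in a perfect matching), and symmetrically deleting the $u_0$ blocks $P_x$ with $x\in U_0$ removes $u_0$ edges from every vertex of $V_1$. By Reduction 1, deleting $S_0\subseteq P_0$ removes one further edge from each vertex of $V_1$ whose second coordinate lies in $S$, i.e.\ the vertices $(m,b)_1$ with $b\in S$ and the straight-neighbour argument of Proposition \ref{BqProp}(iv) shows precisely one such edge is lost; symmetrically deleting $T_0\subseteq L_0$ removes one edge from each vertex $(x,y)_0$ with $y\in T$. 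This is exactly the content already recorded in Lemma \ref{Red2}(iii), so I would cite that lemma to justify the ``$-1$'' terms and the base degrees $q-u_1$ on the $V_0$ side and $q-u_0$ on the $V_1$ side, handling the special vertices $(0,b)_1$ with $b\in S-T$ (case 3) via the same $S\subsetneq T$ analysis used there.

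For the added edges, the amalgam by definition superimposes $H_1$ on $P_0'$, $H_2$ on $L_0'$, $G_1$ on each $P_i$ with $i\in GF^*(q)-U_0$, and $G_2$ on each $L_j$ with $j\in GF^*(q)-U_1$, with labels identified to the second coordinates. Hence a vertex $(0,y)_0$ gains exactly $d_{H_1}(y)$ edges, a vertex $(0,b)_1$ gains $d_{H_2}(b)$, a vertex $(x,y)_0$ (with $x\neq 0$, $x\notin U_0$) gains $d_{G_1}(y)$, and a vertex $(m,b)_1$ (with $m\neq 0$, $m\notin U_1$) gains $d_{G_2}(b)$. Adding these to the reduced degrees yields the seven formulas verbatim. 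The only point requiring a little care is consistency of the index ranges: the amalgam is applied only to the surviving blocks, so I must confirm that the vertices referenced in cases 4--7 are precisely those not deleted by Reduction 2, which is immediate from the definitions of $U_0,U_1$.

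Honestly, there is no real obstacle here — the lemma is, as the authors say, immediate — so the ``hard part'' is purely organizational: keeping the four label identifications straight and making sure the $u_0$ versus $u_1$ sides are not swapped (note that $u_1$ appears on the $V_0$ degrees because deleting $L$--blocks lowers $V_0$ degrees, and vice versa). I would therefore present the proof as a short case analysis, combining the removed-edge count from Lemma \ref{Red2} with the added-edge count from the amalgam definition, and remark that each of the seven cases is just the sum of the corresponding base degree and the relevant $d_{H}$ or $d_{G}$ term.
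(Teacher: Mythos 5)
Your proof is correct and is precisely the argument the paper intends: the paper states Lemma \ref{rgrados} without proof (``The next lemma is immediate''), and your bookkeeping --- base degrees from Lemma \ref{Red2}(iii) via Proposition \ref{BqProp}(iii),(iv), plus the added degrees $d_{H_1}, d_{H_2}, d_{G_1}, d_{G_2}$ from the amalgam's label identification --- is exactly that immediate count, with the $u_0$/$u_1$ sides handled correctly. The only blemish is a typo: the special case for $(0,b)_1$ with $b\in S-T$ rests on $T\subsetneq S$, not ``$S\subsetneq T$'' as you wrote.
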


Note that with the above mentioned labelling, the labels of $G_1,G_2$ are the elements of $GF(q)$
and the labels of $H_1,H_2$ are either the elements of $GF(q)$ itself or a proper subset according to
$S,T$ being empty or not.

Let $M_F:=\{(u,v) : u,v \in GF(q) \mbox{ and } uv \in E(F)\}$, for $F \in \{H_1,H_2,G_1,G_2\}$.
For each $(u,v) \in M_F$, we define $\omega ((u,v)) = \pm (u-v) \in GF^*(q)$ to be its {\em weight} or {\em Cayley Colour}.
We define $\Omega(F):=\{ \omega((u,v)) : (u,v) \in M_F\}$ to be the {\em set of weights} or {\em set of Cayley Colours} of $F$, for $F \in \{H_1,H_2,G_1,G_2\}$.

\noindent Note that $\Omega(F_1) \cap \Omega(F_2) = \emptyset$ implies that $M_{F_1} \cap M_{F_2} = \emptyset$, for $F_1, F_2 \in \{H_1,H_2,G_1,G_2\}$ and $F_1 \ne F_2$, but the converse is false.

The following lemma generalizes theorems \cite[Theorem 5]{AABL11}  and \cite[Theorem 2.8]{F10}.

\begin{theorem}\label{mainth}
Let $T \subseteq S \subseteq GF(q)$ and let $0\le u_0\le u_1 <q-1$. Let $H_1,H_2,G_1$ and $G_2$ be defined as above and suppose that
$M_{H_1} \cap M_{H_2} = \emptyset$, $M_{H_1} \cap M_{G_2} = \emptyset$, $M_{H_2} \cap M_{G_1} = \emptyset$
and $\Omega(G_1) \cap \Omega(G_2) = \emptyset$. Then the amalgam $B^*_q(S,T,u_0,u_1)$ has girth at least $5$ and order $2q^2-q(u_0+u_1)-|S|-|T|$.
\end{theorem}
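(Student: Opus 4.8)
The plan is to verify the order by a direct count and then to establish girth~$5$ by showing that the amalgam operation creates no cycle of length~$3$ or~$4$. Since $B^*_q(S,T,u_0,u_1)$ has exactly the same vertex set as $B_q(S,T,u_0,u_1)$ (the amalgam only adds edges within existing blocks), the order is immediately $2q^2-q(u_0+u_1)-|S|-|T|$ by Lemma~\ref{Red2}~$(iii)$. It remains to bound the girth from below by~$5$, equivalently to rule out triangles and $4$--cycles.

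First I would fix terminology for the edges of $G:=B^*_q(S,T,u_0,u_1)$. There are two kinds: the \emph{old} edges inherited from $B_q(S,T,u_0,u_1)$, which join $V_0$ to $V_1$ across the bipartition according to the affine rule~(\ref{Bq}); and the \emph{new} edges contributed by the amalgam, which lie entirely inside a single block $P_i$ or $L_j$ (coming from $H_1,H_2,G_1$ or $G_2$). The key observation is that every new edge is \emph{horizontal} in the sense that it connects two vertices sharing the same first coordinate, while every old edge is \emph{transversal}, connecting $V_0$ to $V_1$. Since $B_q$ is bipartite of girth~$6$ for $q\ge 3$ (Proposition~\ref{BqProp}~$(i)$), and the reductions only delete vertices, any cycle using only old edges already has length at least~$6$. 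Therefore any short cycle must use at least one new edge, and I would organize the analysis by the number and location of the new edges appearing in a putative $3$-- or $4$--cycle.

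The heart of the argument is to translate the hypotheses on the weights into the non--existence of short cycles. Here I would use the Cayley--colour interpretation: an old edge from $(x,y)_0$ to $(m,b)_1$ satisfies $y=mx+b$, and a new edge inside a block records a difference $\omega=u-v$ of second coordinates. I expect the case analysis to break as follows. \emph{Triangles:} a triangle must contain either one new edge and two old edges, or three new edges; but three new edges cannot close up since each new edge stays in one block and the four blocks are distinct, so a triangle would need one new and two old edges, forcing an odd closed walk across the bipartition after one horizontal step, which is impossible by a parity/coordinate check. \emph{Four--cycles:} the substantive case is a $4$--cycle using exactly two new edges and two old edges, or two new edges in two different blocks; tracing the second coordinates around such a cycle and using~(\ref{Bq}) reduces the closure condition to an equation of the form $\omega_1=\pm\omega_2$ between weights of the two participating graphs. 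The four disjointness hypotheses $M_{H_1}\cap M_{H_2}=\emptyset$, $M_{H_1}\cap M_{G_2}=\emptyset$, $M_{H_2}\cap M_{G_1}=\emptyset$ and $\Omega(G_1)\cap\Omega(G_2)=\emptyset$ are precisely what forbid each such coincidence: the $M$--disjointness rules out $4$--cycles in which one endpoint lies in the straight block $P_0'$ or $L_0'$ (where edges are indexed by actual pairs, not just differences), while the weight--disjointness $\Omega(G_1)\cap\Omega(G_2)=\emptyset$ rules out the cross--block $4$--cycle between a twisted $P_i$ and a twisted $L_j$, since in that configuration the affine rule converts the two horizontal steps into equal weights.

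I would finish by checking the residual configurations: a $4$--cycle with both new edges in the \emph{same} block is impossible because $H_1,H_2,G_1,G_2$ each have girth at least~$5$ and thus contain no two edges spanning a common pair; and a $4$--cycle with exactly one new edge reduces, after deleting that edge, to a path of length~$3$ between two vertices of the same block in $B_q(S,T,u_0,u_1)$, which cannot exist because it would yield a $4$--cycle or a path of odd parity contradicting the bipartite structure of girth~$6$. The main obstacle, and the step requiring the most care, is the bookkeeping in the cross--block $4$--cycle case: one must verify that whenever the two horizontal steps occur in a twisted $P_i$ and a twisted $L_j$ (rather than in the straight blocks $P_0',L_0'$), the slopes $i,j$ conspire through~(\ref{Bq}) so that the closure condition becomes exactly $\omega(G_1)=\omega(G_2)$ up to sign, thereby invoking $\Omega(G_1)\cap\Omega(G_2)=\emptyset$; the asymmetry between the straight blocks (governed by the $M$--conditions) and the twisted blocks (governed by the $\Omega$--condition) is the reason four separate hypotheses are needed rather than a single uniform one.
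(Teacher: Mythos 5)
Your proposal is correct and takes essentially the same approach as the paper: the order follows because the amalgam preserves the vertex set (Lemma~\ref{Red2}), and the girth bound is obtained by the same case analysis, reducing a putative $3$-- or $4$--cycle via the incidence rule $y=mx+b$ to either an equality of actual edge pairs --- excluded by the three $M$--disjointness hypotheses when a new edge lies in $P_0'$ or $L_0'$ (since $mi=0$ there) --- or an equality of weights, excluded by $\Omega(G_1)\cap\Omega(G_2)=\emptyset$ when both new edges lie in twisted blocks. The paper organizes the bookkeeping slightly differently (it shows directly that a shortest cycle of length at most $4$ must consist of two vertices in some $P_i$ and two in some $L_m$, using the matching structure between blocks), but the substance, including your closing observation about why the straight and twisted blocks require different hypotheses, coincides with the paper's proof.
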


\begin{proof}
Suppose first that $u_0=u_1=0$ and so $B^*_q(S,T)=B^*_q(S,T,u_0,u_1)$ (cf. Reduction 2).

Let $C$ be a shortest cycle in $B^*_q(S,T)$ and suppose, by
contradiction, that $|C| \leq 4$. Therefore, $C=(xyz)$ or
$C=(wxyz)$. Since $B_q$ has girth $6$ and $H_1,H_2,G_1,G_2$ have
girth at least $5$, then $C$ cannot be completely contained in
$B_q$ or in some $H_i$ or $G_i$ for $i=1,2$. Then, w.l.o.g. the
path $xyz$ in $C$ is such that $x,y \in P_i$ and $z \in L_m$ for
some $i,m \in GF(q)$. Since the edges between $P_i$ and $L_m$ form
a matching, then $xz \notin E(B_q)$ and hence $xz \notin E(B_q^*(S,T))$.
Thus $|C| > 3$ and we can assume $|C|=4$ and $C=(wxyz)$,with $xyz$ taken as before.

If $w \in P_i$, by the same argument, $wz \notin E(B_q^*(S,T))$
and we have a contradiction. There are no edges between $P_i$ and
$P_j$ in $B^*_q(S,T)$, so $w \notin P_j$ for $j \in GF(q) - \{i\}$,
which implies that $w \in L_n$ for some $n \in GF(q)$.
If $n \ne m$, we have a contradiction since there are no edges
between $L_m$ and $L_n$ in $B^*_q(S,T)$. Therefore $x,y \in P_i$
and $w,z \in L_m$. Let $x=(i,a_1)_0, y=(i,a_2)_0,
w=(m,b_1)_1$ and $z=(m,b_2)_1$ as in the labelling chosen in
Section \ref{prel}. Then $wx,yz \in E(B^*_q(S,T))$ imply that
$a_1=m \cdot i + b_1$ and $a_2=m \cdot i + b_2$,
respectively.

If $m$ or $i$ are zero, i.e. if $xy \in H_1$ or $wz \in H_2$,
then the above equations are satisfied if and only if
$a_1 = b_1$ and $a_2 = b_2$, but this contradicts at least one of
$M_{H_1} \cap M_{H_2} = \emptyset$, $M_{H_1} \cap M_{G_2} = \emptyset$ and
$M_{H_2} \cap M_{G_1} = \emptyset$.

If $m$ and $i$ are both non-zero, i.e. if $xy \in G_1$ and $wz \in G_2$,
then the above equations are satisfied if and only if
$a_1 - a_2 = b_1 - b_2$, implying that $\pm (a_1 - a_2) \in \Omega(G_1)$
and $\pm (a_1 - a_2) = \pm (b_1 - b_2) \in \Omega(G_2)$
which contradicts $\Omega(G_1) \cap \Omega(G_2) = \emptyset$.

Hence $B^*_q(S,T)$ has girth at least five. Since $B^*_q(S,T,u_0,u_1)$ is a subgraph of $B^*_q(S,T)$, for $0\le u_0\le u_1 <q-1$, then also $B^*_q(S,T,u_0,u_1)$ ha girth five, completing the proof.
\end{proof}

\section{New $(\{r,2r-5\};5)$-cages for $r \ge 8$.}\label{Consr2r_5}

In this section we will construct a new family of  $(\{r,2r-5\};5)$-cages, for $r=q+1$ and $q\ge 7$ a prime, applying Reduction 1, 2 and Amalgam (cf. Theorem \ref{mainth} and Lemma \ref{rgrados}) to the graph $B_q$ as previously described.

Recall that every prime $q$ is either congruent to $1$ or $3$ modulo $4$, we will now treat these two cases separately, when $q=4n+1$ or $q=4n+3$.
In each case we will specify the sets $S$ and $T$ to be deleted from $P_0$ and $L_0$, the integers $u_0$ and $u_1$ of number of blocks to be deleted, and the graphs $H_1$, $H_2$, $G_1$ and $G_2$ to be used for the amalgam into $B_q^{*}(S,T,u_0,u_1)$.

Since $q$ is a prime we can consider that $GF(q)$ coincides with $\mathbb Z_q$, and the addition operations are modulo $q$.
We present the case $q \equiv 3 \mod 4$ first, since the smallest case of the construction occurs for $q=7 \equiv 3 \mod 4.$
In what follows, recall that $D(F)$ denotes the degree set of a graph $F$.

\subsection{Construction for primes $q=4n+3$, $n \ge 1$.}

Let $B_q^*(S,T,u_0,u_1)$ be the graph resulting from  the following choice of its parameters:

\noindent
\resizebox{15cm}{!}{
\begin{minipage}[h]{20cm}
\begin{tabular}{|l|c|c|l|}
\hline
\multicolumn{4}{|c|}{\rule{0pt}{20pt} $S = \{\frac{q+1}{4}, - \frac{q+1}{4}\} = \{\frac{q+1}{4}, \frac{3q-1}{4}\};$ \hspace{1cm}  
$T = \emptyset;$ \hspace{1cm} $u_0=0;$ \hspace{1cm}  $u_1=1$ }\\
\multicolumn{4}{|c|}{} \\
\hline
Graph & Vertices & Edges & Description \\
\hline \rule{0pt}{20pt}
$H_1$ & $\mathbb Z_q-\{\frac{q+1}{4}, \frac{3q-1}{4}\}$ &
$\left\{ (j,j+\frac{q-1}{2}) \, | \,  j\in \mathbb Z_q-\{\frac{q+1}{4}, \frac{3q-1}{4}, \frac{3q+3}{4} \} \right\} \cup
\left\{ (\frac{3q+3}{4},\frac{q-3}{4}) \right\}$ &
$(q-2)$--cycle \\
& & sums modulo $q$ & $\Omega(H_1) = \{\frac{q-1}{2}, \frac{q-3}{2} \}$ \\
\hline \rule{0pt}{20pt}
$G_1$ & $\mathbb Z_q$ &
$\left\{ (j,j+\frac{q-1}{2}) \, | \,  j\in \mathbb Z_q \right\}$ &
$q$--cycle \\
& & sums modulo $q$ & $\Omega(G_1) = \{\frac{q-1}{2} \}$ \\
\hline \rule{0pt}{20pt}
$H_2 \cong G_2$ & $\mathbb Z_q$ &
$\left\{(0,j) : j\in \mathbb Z_q^*-\{\frac{q-1}{2},\frac{q+1}{2}\} \right\} \cup
\left\{ (\frac{q+1}{4},\frac{q-1}{2}), (\frac{3q-1}{4},\frac{q+1}{2})\} \right\}$ & $\Omega(H_2) = \Omega(G_2) = \mathbb Z_q^* - \{\frac{q-1}{2} \}$ \\
\hline
\end{tabular}
\end{minipage}
}

\

To illustrate the construction we present in Figure \ref{Fig1_8_11_5} the graph $B_q^{*}(S,T,u_0,u_1)$ without the edges from $B_q$, for $q=7$.
Each line style represents a different weight (or Cayley Colour).
As we will proved in the Theorem \ref{r2r-5c3}, this is an $(\{8,11\};5)$--cage.

\begin{figure}[h!]
\begin{center}
\begin{tabular}{ccccccl}
  \rule{0.5cm}{0pt}
  \begin{pspicture}(-.7,-.7)(.7,.7)
  \psset{unit=0.6}
  \grCfive{0.6}{2.6}
  \end{pspicture} &
  \begin{pspicture}(-.7,-.7)(.7,.7)
  \psset{unit=0.6}
  \grCseven{0.6}{2.6}
  \end{pspicture} &
  \begin{pspicture}(-.7,-.7)(.7,.7)
  \psset{unit=0.6}
  \grCseven{0.6}{2.6}
  \end{pspicture} &
  \begin{pspicture}(-.7,-.7)(.7,.7)
  \psset{unit=0.6}
  \grCseven{0.6}{2.6}
  \end{pspicture} &
  \begin{pspicture}(-.7,-.7)(.7,.7)
  \psset{unit=0.6}
  \grCseven{0.6}{2.6}
  \end{pspicture} &
  \begin{pspicture}(-.7,-.7)(.7,.7)
  \psset{unit=0.6}
  \grCseven{0.6}{2.6}
  \end{pspicture} &
  \begin{pspicture}(-.7,-.7)(.7,.7)
  \psset{unit=0.6}
  \grCseven{0.6}{2.6}
  \end{pspicture} \\
  \rule{0.5cm}{0pt}
  \begin{pspicture}(-.7,-.7)(.7,.7)
  \psset{unit=0.6}
  \grPfourPtwo{0.6}{2.6}
  \end{pspicture} &
  \begin{pspicture}(-.7,-.7)(.7,.7)
  \psset{unit=0.6}
  \grPfourPtwo{0.6}{2.6}
  \end{pspicture} &
  \begin{pspicture}(-.7,-.7)(.7,.7)
  \psset{unit=0.6}
  \grPfourPtwo{0.6}{2.6}
  \end{pspicture} &
  \begin{pspicture}(-.7,-.7)(.7,.7)
  \psset{unit=0.6}
  \grPfourPtwo{0.6}{2.6}
  \end{pspicture} &
  \begin{pspicture}(-.7,-.7)(.7,.7)
  \psset{unit=0.6}
  \grPfourPtwo{0.6}{2.6}
  \end{pspicture} & \begin{pspicture}(-.7,-.7)(.7,.7)
  \psset{unit=0.6}
  \grPfourPtwo{0.6}{2.6}
  \end{pspicture}  &
  \begin{pspicture}(-0.7,-0.7)(3,0.7)
  \psset{unit=0.6}
  \psline[linestyle=dashed, dash=2pt 3pt](-1,1)(0,1) \rput(1,1){$\omega = 1$}
  \psline[linewidth=0.5pt,linestyle=dotted, dotsep=0.5pt](-1,0)(0,0) \rput(2,0){$\omega = \frac{q-3}{2} = 2$}
  \psline(-1,-1)(0,-1) \rput(2,-1){$\omega = \frac{q-1}{2} = 3$}
  \end{pspicture} \\
\end{tabular}
\caption{$B^{*}_{7}(S,T,0,1)-E(B_7)$ with $S=\{3,5\}$ and $T=\emptyset$ }\label{Fig1_8_11_5}
\end{center}
\end{figure}
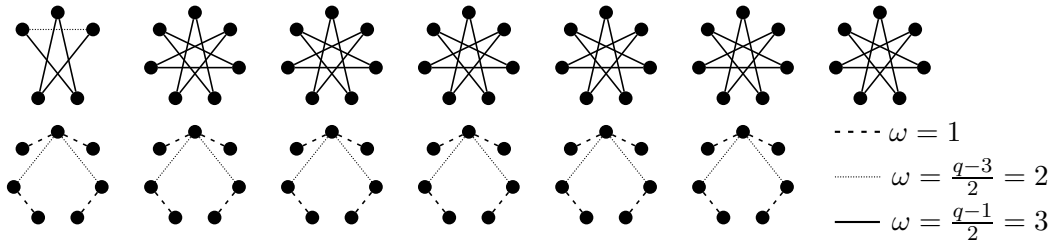

 \begin{theorem}\label{r2r-5c3}
 Let $q=4n+3$ be a prime, for $n \ge 1 $. Let $S,T,u_0,u_1,H_1,H_2,G_1$ and $G_2$ be defined as above.
 Then the amalgam graph $B^*_q(S,T,u_0,u_1)$ is an $(\{r,2r-5\};5)$--cage of order $r(2r-5)+1$, where $r=q+1$.
 \end{theorem}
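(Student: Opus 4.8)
The plan is to establish the four things the statement bundles together: the degree set equals $\{r,2r-5\}$, the order equals $r(2r-5)+1$, the girth is at least $5$, and the girth is exactly $5$ together with minimality. The first three fall out of Lemma \ref{rgrados} and Theorem \ref{mainth} applied to the tabulated parameters $S=\{\frac{q+1}{4},\frac{3q-1}{4}\}$, $T=\emptyset$, $u_0=0$, $u_1=1$, while the last two come from Downs' bound (\ref{Dcota}).

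First I would record the local degrees in the amalgam pieces. Both $H_1$ (a $(q-2)$-cycle) and $G_1$ (a $q$-cycle) are $2$-regular, so $d_{H_1}\equiv d_{G_1}\equiv 2$. For $H_2\cong G_2$ a direct count of the listed edges gives $d_{H_2}(0)=q-3$ (its star at $0$ omits the two slopes $\frac{q-1}{2},\frac{q+1}{2}$), $d_{H_2}(\frac{q+1}{4})=d_{H_2}(\frac{3q-1}{4})=2$ (a star edge plus one exceptional edge each), and $d_{H_2}(b)=1$ otherwise. Substituting these into Lemma \ref{rgrados} with $u_0=0$, $u_1=1$, $T=\emptyset$ and $S=\{\frac{q+1}{4},\frac{3q-1}{4}\}$, one checks case by case that every vertex acquires degree $q+1=r$, the sole exceptions being the vertices $(m,0)_1$ with $m\in\{0\}\cup(GF^*(q)-U_1)$, which get degree $q+(q-3)=2q-3=2r-5$. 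There are exactly $q-1$ of these, and since $r=q+1\ge 8$ we have $r\ne 2r-5$, so the degree set is genuinely $\{r,2r-5\}$. The order is then immediate from Theorem \ref{mainth}: $2q^2-q(u_0+u_1)-|S|-|T|=2q^2-q-2=(q+1)(2q-3)+1=r(2r-5)+1$.

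The core of the work is verifying the four hypotheses of Theorem \ref{mainth}. Since $H_2$ and $G_2$ are the same labelled graph, two of the four conditions coincide, and the condition $M_{H_2}\cap M_{G_1}=\emptyset$ is free: $\Omega(G_1)=\{\frac{q-1}{2}\}$ is disjoint from $\Omega(G_2)=\Omega(H_2)=\mathbb Z_q^*-\{\frac{q-1}{2}\}$, which settles $\Omega(G_1)\cap\Omega(G_2)=\emptyset$ and, through the paper's remark that weight-disjointness implies edge-disjointness, also $M_{G_1}\cap M_{H_2}=\emptyset$. The genuinely delicate condition is $M_{H_1}\cap M_{H_2}=\emptyset$: here the weight sets $\Omega(H_1)=\{\frac{q-1}{2},\frac{q-3}{2}\}$ and $\Omega(H_2)$ overlap in the single weight $\frac{q-3}{2}$, so weight-disjointness is unavailable and I must compare the actual edges. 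The weight-$\frac{q-1}{2}$ edges of $H_1$ cannot lie in $H_2$ because $\frac{q-1}{2}\notin\Omega(H_2)$; the only weight-$\frac{q-3}{2}$ edge of $H_1$ is the exceptional edge $(\frac{3q+3}{4},\frac{q-3}{4})$, which I would show has neither endpoint equal to $0$, whereas every weight-$\frac{q-3}{2}$ edge of $H_2$ is a star edge incident to $0$. This vertex-level comparison, made possible exactly by the exceptional edge grafted into $H_1$ and by the two slopes deleted from the star of $H_2$, is where the construction is finely tuned, and I expect it to be the main obstacle.

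Granting the hypotheses, Theorem \ref{mainth} delivers girth at least $5$ and the order above. To pin the girth to exactly $5$ and obtain minimality I would use Downs' bound (\ref{Dcota}) with $a_1=r$ and $a_k=2r-5$, which is admissible since $r<2r-5$ for $r\ge 8$. For $g=5$ it reads $n(\{r,2r-5\};5)\ge r(2r-5)+1$, met here with equality, so the graph is a cage as soon as its girth is $5$. If the girth were instead at least $6$, the $g=6$ case of (\ref{Dcota}) would force at least $r(2r-5)+1+(r-1)^2$ vertices, strictly more than the $r(2r-5)+1$ we have; this contradiction shows the girth is exactly $5$ and finishes the proof. (Alternatively one can exhibit a concrete $5$-cycle by closing an amalgam edge against a length-$4$ path between its endpoints in $B_q$, which exist at distance exactly $4$, but the bound argument is cleaner and simultaneously yields the cage property.)
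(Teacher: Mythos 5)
Your proposal is correct and, in substance, follows the same route as the paper's own proof: the degree set and order come from Lemma \ref{rgrados} and Lemma \ref{Red2} exactly as you do it (the paper splits $V_1$ into the vertices $(m,0)_1$, those with second coordinate in $S$, and the rest, which is your case analysis), and girth at least $5$ comes from Theorem \ref{mainth}, including the one genuinely delicate point that you correctly isolated: the weight sets of $H_1$ and $H_2\cong G_2$ overlap in a single class (your $\frac{q-3}{2}$ is the paper's $\frac{q+3}{2}$; they coincide up to sign modulo $q$), and the conflict is resolved at the edge level because the unique $H_1$-edge of that weight is the exceptional edge $(\frac{3q+3}{4},\frac{q-3}{4})$, which avoids the vertex $0$, while every $H_2$-edge of that weight is a star edge at $0$. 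The only place you diverge is the final step. The paper pins the girth to exactly $5$ by exhibiting a concrete $5$-cycle and then invokes Downs' bound (\ref{Dcota}) for minimality; you instead argue by contradiction from the monotonicity of (\ref{Dcota}): a girth of at least $6$ would force order at least $r(2r-5)+1+(r-1)^2$, exceeding the actual order. Your variant is not only valid but arguably more robust here: verifying a concrete short cycle inside the amalgam requires checking which vertices and edges survive Reduction 1, and the $5$-cycle displayed in the paper actually passes through $(0,\frac{q+1}{4})_0$, a vertex of $S_0$ that has been deleted, and uses the pair $(0,\frac{q-1}{2})_1(0,0)_1$, which is not an edge of $H_2$; so as written it cannot be taken literally. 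Your bound argument sidesteps this issue entirely and yields the cage property in the same stroke, at the mild cost of implicitly using that the bound (\ref{Dcota}) is increasing in $g$, which is immediate from its formula.
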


 \begin{proof}
 From Lemma \ref{rgrados}, the degree set $D(B_q^{*}(S,T,u_0,u_1))=\{r,2r-5\}$, where $r=q+1$.
 In fact, all vertices in $V_0 \cap B_q(S,T,u_0,u_1)$ have degree $q-1$. Moreover, $D(H_1)=D(G_1)=\{2\}$
 since $H_1$ and $G_1$ are cycles. Hence, all the vertices of $V_0 \cap V(B_q^{*}(S,T,u_0,u_1))$ have degree $q+1=r$.
 In $V_1$ we distinguish three subsets of vertices: $V_1':=\{(m,0)_1: m\in \mathbb Z_q\}$,
 $V_1'':=\{(m,t)_1: m \in \mathbb Z_q, t \in \frac{q+1}{4}, \frac{3q-1}{4} \}$ and $V_1''':=\{(m,t)_1: t \in \mathbb Z_q^* - \{\frac{q+1}{4}, \frac{3q-1}{4}\}\}$.
 The vertices in $V_1'''$  have degree $q$ in $B_q(S,T,u_0,u_1)$, and their degree is $1$ in $H_2$ and $G_2$. Thus these vertices
 have degree $q+1=r$ in $B_q^{*}(S,T,u_0,u_1)$. The vertices of $V_1''$ have degree $q-1$ in  $B_q(S,T,u_0,u_1)$, whereas their degree is  $2$
 in $H_2$ and $G_2$. Hence, they have degree $q+1=r$ in $B_q^{*}(S,T,u_0,u_1)$.
 Finally, the vertices in $V_1'$ have degree $q$ in $B_q(S,T,u_0,u_1)$ , while they have degree  $q-3$ in $H_2$ and $G_2$.
 Hence, they have degree $q+q-3=2q-3=2(r-1)-3=2r-5$ in $B_q^{*}(S,T,u_0,u_1)$.

The graph $B_q^{*}(S,T,u_0,u_1)$ has girth at least $5$, since we are under the hypothesis of Theorem \ref{mainth}.
In fact, $\Omega(H_1) \cap \Omega(H_2) = \Omega(H_1) \cap \Omega(G_2) = \{\frac{q+3}{2}\}$, but
$M_{H_1} \cap M_{H_2} = M_{H_1} \cap M_{G_2} = \emptyset$, since the only edge of weight $\frac{q+3}{2}$
in $H_1$ is $(\frac{3q+3}{4},\frac{q-3}{4})$, while  the edges $(0, \frac{q-3}{2})$
$(0, \frac{q+3}{2})$ are those of weight $\pm \frac{q+3}{2}$ in $H_2$ and $G_2$.
We also have that $\Omega(G_1) \cap \Omega(H_2) = \Omega(G_1) \cap \Omega(G_2) = \emptyset$, which
implies that $M_{G_1} \cap M_{H_2} = M_{G_1} \cap M_{G_2} = \emptyset$.
Moreover, the girth is exactly five, since the $5$--cycle $((0,0)_0,(0,\frac{q+1}{4})_0,(0,\frac{q-1}{2})_0, (0,\frac{q-1}{2})_1,(0,0)_1)$ lies in  $B_q^{*}(S,T,u_0,u_1)$.

Finally, the order of $B_q^{*}(S,T,u_0,u_1)$ is $2q^{2}-q(u_0+u_1)-|S|-|T|=2q^{2}-q-2=r(2r-5)+1$, by Lemma \ref{Red2},
which satisfies exactly Down's bound (\ref{Dcota})
$$
 n(\{r,2r-5\};5) = 1 + \sum_{i=1}^{2} (2r-5)(r-1)^{i-1}=r(2r-5)+1.
$$
Hence, the graph $B_q^{*}(S,T,0,1)$ is an  $(\{r,2r-5\};5)$-cage.
 \end{proof}

\subsection{Construction for primes $q=4n+1$, $n \ge 3$.}
Let $B_q^*(S,T,u_0,u_1)$ be the graph resulting from  the following choice of its parameters:

\noindent
\resizebox{15cm}{!}{
\begin{minipage}[h]{20cm}
\begin{tabular}{|l|c|c|l|}
\hline
\multicolumn{4}{|c|}{\rule{0pt}{20pt} $S = \{\frac{q-1}{4}, - \frac{q-1}{4}\} = \{\frac{q-1}{4}, \frac{3q+1}{4}\};$ \hspace{1cm}  
$T = \emptyset;$ \hspace{1cm} $u_0=0;$ \hspace{1cm}  $u_1=1$ }\\
\multicolumn{4}{|c|}{} \\
\hline
Graph & Vertices & Edges & Description \\
\hline \rule{0pt}{20pt}
$H_1$ & $\mathbb Z_q-\{\frac{q-1}{4}, \frac{3q+1}{4}\}$ &
$\left\{ (j,j+\frac{q-1}{2}) \, | \,  j\in \mathbb Z_q-\{\frac{q-1}{4}, \frac{3q+1}{4}, \frac{3q-3}{4} \} \right\} \cup
\left\{ (\frac{3q-3}{4},\frac{q+3}{4}) \right\}$ &
$(q-2)$--cycle \\
& & sums modulo $q$ & $\Omega(H_1) = \{\frac{q-1}{2}, \frac{q+3}{2} \}$ \\
\hline \rule{0pt}{20pt}
$G_1$ & $\mathbb Z_q$ &
$\left\{ (j,j+\frac{q-1}{2}) \, | \,  j\in \mathbb Z_q \right\}$ &
$q$--cycle \\
& & sums modulo $q$ & $\Omega(G_1) = \{\frac{q-1}{2} \}$ \\
\hline \rule{0pt}{20pt}
$H_2 \cong G_2$ & $\mathbb Z_q$ &
$\left\{(0,j) : j\in \mathbb Z_q^*-\{\frac{q-1}{2},\frac{q+1}{2}\} \right\} \cup
\left\{ (\frac{q-1}{4},\frac{q-1}{2}), (\frac{3q+1}{4},\frac{q+1}{2})\} \right\}$ & $\Omega(H_2) = \Omega(G_2) = \mathbb Z_q^* - \{\frac{q-1}{2} \}$ \\
\hline
\end{tabular}
\end{minipage}
}

 \begin{theorem}\label{r2r-5c1}
 Let $q=4n+1$ be a prime, for $n \ge 3 $. Let $S,T,u_0,u_1,H_1,H_2,G_1$ and $G_2$ be defined as above.
 Then the amalgam graph $B^*_q(S,T,u_0,u_1)$ is an $(\{r,2r-5\};5)$--cage of order $r(2r-5)+1$, where $r=q+1$.
 \end{theorem}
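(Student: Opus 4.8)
The plan is to reproduce the argument of Theorem~\ref{r2r-5c3} almost verbatim, since passing from the case $q\equiv 3\pmod 4$ to $q\equiv 1\pmod 4$ only shifts the exceptional weight of $H_1$ from $\frac{q-3}{2}$ to $\frac{q+3}{2}$ and relocates the two special edges accordingly, leaving the mechanism untouched. Concretely I would verify, in this order: (a) that Lemma~\ref{rgrados} yields the degree set $\{r,2r-5\}$; (b) that the four hypotheses of Theorem~\ref{mainth} hold, giving girth at least $5$; (c) that an explicit $5$-cycle is present, forcing girth exactly $5$; and (d) that the order computed from Lemma~\ref{Red2} meets Downs' bound~(\ref{Dcota}), upgrading the graph from an $(\{r,2r-5\};5)$-graph to a cage.

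For (a) I would apply Lemma~\ref{rgrados} with $u_0=0$, $u_1=1$ and $T=\emptyset$. Since $H_1$ is a $(q-2)$-cycle and $G_1$ a $q$-cycle, $d_{H_1}\equiv d_{G_1}\equiv 2$, so clauses (1) and (4) give every vertex of $V_0$ degree $(q-1)+2=r$. On $V_1$ the only computation that is not immediate is $d_{H_2}(0)=d_{G_2}(0)=q-3$, the vertex $0$ being joined to all labels of $\mathbb Z_q^{*}-\{\frac{q-1}{2},\frac{q+1}{2}\}$; the two special edges give $d_{H_2}(\frac{q-1}{4})=d_{H_2}(\frac{3q+1}{4})=2$ and every other label degree $1$. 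Feeding these into clauses (2),(3),(6),(7) — with the $-1$ correction applied exactly to the labels of $S=\{\frac{q-1}{4},\frac{3q+1}{4}\}$ — shows that the vertices $(m,0)_1$ have degree $q+(q-3)=2r-5$ and all remaining vertices of $V_1$ have degree $r$, so $D(B_q^{*}(S,T,0,1))=\{r,2r-5\}$.

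Step (b) is where the only real obstacle lies, and it is the same subtlety already met in Theorem~\ref{r2r-5c3}. Two of the four conditions are free, because $\Omega(G_1)=\{\frac{q-1}{2}\}$ is disjoint from $\Omega(G_2)=\Omega(H_2)=\mathbb Z_q^{*}-\{\frac{q-1}{2}\}$, giving at once $\Omega(G_1)\cap\Omega(G_2)=\emptyset$ and $M_{G_1}\cap M_{H_2}=\emptyset$. However $\Omega(H_1)\cap\Omega(H_2)=\{\frac{q+3}{2}\}\neq\emptyset$, so the remaining two conditions $M_{H_1}\cap M_{H_2}=M_{H_1}\cap M_{G_2}=\emptyset$ cannot be read off from the weights and must be checked edge by edge. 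The point to verify is that the unique edge of $H_1$ carrying weight $\pm\frac{q+3}{2}$ is the special edge $(\frac{3q-3}{4},\frac{q+3}{4})$, whose endpoints are both nonzero, whereas the edges of $H_2\cong G_2$ of that weight are $(0,\frac{q+3}{2})$ and $(0,\frac{q-3}{2})$, each incident with $0$; hence these pairs are distinct and the two intersections are empty, so Theorem~\ref{mainth} applies and the girth is at least $5$.

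For (c) I would exhibit the analogue of the $5$-cycle used in Theorem~\ref{r2r-5c3}, for instance $((0,1)_0,(0,\tfrac{q+1}{2})_0,(0,0)_0,(0,0)_1,(0,1)_1)$: the first two edges are the weight-$\frac{q-1}{2}$ edges $(1,\frac{q+1}{2})$ and $(\frac{q+1}{2},0)$ of $H_1$ inside $P_0$, the edge $(0,0)_1(0,1)_1$ is the weight-$1$ edge of $H_2$ inside $L_0$, and the cycle is closed by the two straight $B_q$-edges $(0,0)_0(0,0)_1$ and $(0,1)_1(0,1)_0$. One checks that the three second coordinates $0,1,\frac{q+1}{2}$ avoid $S$ and that the two edges used genuinely belong to $H_1$, for every prime $q=4n+1\ge 13$, so all five vertices and edges are present. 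Finally Lemma~\ref{Red2} gives order $2q^{2}-q(u_0+u_1)-|S|-|T|=2q^{2}-q-2=r(2r-5)+1$, which is exactly the value of Downs' bound~(\ref{Dcota}) for $g=5$; combined with (a) and the girth this proves that $B_q^{*}(S,T,0,1)$ is an $(\{r,2r-5\};5)$-cage. The hypothesis $n\ge 3$ is used only to guarantee that the special labels and the labels of the $5$-cycle do not collide with the removed set $S$, i.e. that the small cases $q=5,9$ (the latter not prime anyway) are excluded.
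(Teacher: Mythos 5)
Your proposal is correct and takes exactly the paper's route: the paper's entire proof of this theorem reads ``Analogous to the proof of Theorem~\ref{r2r-5c3}'', and your four steps (degree set via Lemma~\ref{rgrados}, girth at least $5$ via Theorem~\ref{mainth} together with the edge-by-edge check of the single shared weight class $\pm\tfrac{q+3}{2}$, an explicit $5$--cycle, and the order computation against Downs' bound) are precisely that analogy carried out in detail. As a minor bonus, your $5$--cycle $((0,1)_0,(0,\tfrac{q+1}{2})_0,(0,0)_0,(0,0)_1,(0,1)_1)$ uses only surviving vertices, whereas the template $5$--cycle displayed in the paper's proof of Theorem~\ref{r2r-5c3} passes through $(0,\tfrac{q+1}{4})_0\in S_0$, a vertex that has been deleted.
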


\begin{proof}
Analogous to the proof of Theorem \ref{r2r-5c3}.
\end{proof}

\section{New semiregular $(\{r,r+1\};5)$-cages for $r=5,6.$}\label{Small}

In this section we construct two new semiregular cages, namely a $(\{5,6\};5)$-cage with $31$ vertices and a $(\{6,7\};5)$-cage with $43$ vertices.
For the first one, we choose $S,T,u_0,u_1,H_1,H_2,G_1$ and $G_2$ to construct the amalgam graph $B_q^{*}(S,T,u_0,u_1)$ and the result is obtained
as a consequence of Theorem \ref{mainth} and Lemma \ref{rgrados}. The second one is a sporadic example in which we adapt
and slightly generalize the techniques that we have used so far.

\subsection{Construction of the $(\{5,6\};5)$-cage.}
Let $GF(4)=\{0,1,\alpha,\alpha^{2}\}$ be the finite field of order $4$ and let $B_q^*(S,T,u_0,u_1)$ be the graph resulting from  the following choice of its parameters
(c.f. Figure \ref{Fig2_5_6_5}):

 \noindent
\resizebox{20cm}{!}{
\begin{minipage}[h]{20cm}
\begin{tabular}{|l|c|c|l|}
\hline
\multicolumn{4}{|c|}{\rule{0pt}{16pt} $S = \{0\};$ \hspace{1cm}
$T = \emptyset;$ \hspace{1cm} $u_0=0;$ \hspace{1cm}  $u_1=0$ }\\[8pt]
\hline
Graph & Vertices & Edges & Description \\
\hline \rule{0pt}{20pt}
$H_1$ & $GF^*(4)$ &
$\left\{ (1,\alpha^{2}),(\alpha^{2},\alpha) \right\}$ & $2$--path \\
 & & & $\Omega(H_1) = \{1, \alpha \}$ \\
\hline \rule{0pt}{20pt}
$G_1$ & $GF(4)$ &
$\left\{ (0,\alpha),(1,\alpha^{2}) \right\}$ & Two disjoint edges \\
 & & & $\Omega(G_1) = \{\alpha \}$ \\
\hline \rule{0pt}{20pt}
$H_2 \cong G_2$ & $GF(4)$ &
$\left\{(\alpha^{2},0), (0,1), (1,\alpha) \right\}$ & $3$--path \\
 & & & $\Omega(H_2) = \Omega(G_2) = \{1, \alpha^2 \}$ \\
\hline
\end{tabular}
\end{minipage}
}

\begin{theorem}\label{56}
Let $q=4$ and let $S,T,u_0,u_1,H_1,H_2,G_1$ and $G_2$ be defined as above.
Then the amalgam graph $B^*_4(S,T,u_0,u_1)$ is a $(\{5,6\};5)$--cage of order $31$.
 \end{theorem}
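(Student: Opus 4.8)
The plan is to verify that the amalgam graph $B^*_4(S,T,u_0,u_1)$ with $q=4$ satisfies the three defining properties of a $(\{5,6\};5)$--cage: it is biregular with degree set $\{5,6\}$, it has girth exactly $5$, and its order equals the Downs bound $n(\{5,6\};5)=5\cdot 6+1=31$. The structure of the proof will mirror that of Theorem \ref{r2r-5c3}, invoking the general machinery of Lemma \ref{rgrados} for the degree computation and Theorem \ref{mainth} for the girth, but here $r=q+1=5$ and $2r-5=5$, so the regular degree and the large degree coincide only partially — in fact $m=r+1=6=2r-4$, which is why this is a \emph{semiregular} (rather than $(\{r,2r-5\};5)$) cage and must be handled as a separate construction.

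First I would compute the degrees using Lemma \ref{rgrados}. Since $q=4$, $u_0=u_1=0$, $S=\{0\}$ and $T=\emptyset$, the baseline degree in $B_4(S,T,0,0)$ is $q=4$ for most vertices, reduced to $q-1=3$ for the vertices $(j,s)_1$ with $s\in S=\{0\}$ (the $(m,0)_1$ vertices) and for $(0,s)_1$ with $s\in S-T=\{0\}$. Reading off the degree sequences $D(H_1)$, $D(G_1)$, $D(H_2)=D(G_2)$ from the table — $H_1$ a $2$--path (degrees $1,1,2$), $G_1$ two disjoint edges (all degrees $1$), and $H_2\cong G_2$ a $3$--path (degrees $1,1,2,2$) — and adding them to the baseline via the seven cases of Lemma \ref{rgrados}, I expect every vertex to acquire degree $5$ \emph{except} the vertices $(m,0)_1$ for $m\in\mathbb Z_q\cup\{\text{lines}\}$, which have baseline degree $q-u_0-1=3$ and receive degree contribution $2$ from $G_2$ (being internal vertices of the $3$--path), yielding degree $5$, while the genuinely large-degree vertices get degree $6$. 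The careful bookkeeping here — matching each $H_i,G_i$ vertex-degree to the right case and confirming that exactly the right set of vertices reaches degree $6$ — is the main routine obstacle, and I would tabulate it vertex-class by vertex-class.

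Next I would establish girth exactly $5$. For the lower bound (girth at least $5$) it suffices to check the four disjointness hypotheses of Theorem \ref{mainth}: $M_{H_1}\cap M_{H_2}=\emptyset$, $M_{H_1}\cap M_{G_2}=\emptyset$, $M_{H_2}\cap M_{G_1}=\emptyset$, and $\Omega(G_1)\cap\Omega(G_2)=\emptyset$. From the table, $\Omega(H_1)=\{1,\alpha\}$, $\Omega(G_1)=\{\alpha\}$, and $\Omega(H_2)=\Omega(G_2)=\{1,\alpha^2\}$; since $\Omega(G_1)=\{\alpha\}$ is disjoint from $\Omega(G_2)=\{1,\alpha^2\}$ the fourth condition holds immediately, and $\Omega(G_1)\cap\Omega(H_2)=\emptyset$ gives $M_{H_2}\cap M_{G_1}=\emptyset$ at once. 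For the first two conditions, where the weight sets $\Omega(H_1)$ and $\Omega(H_2)=\Omega(G_2)$ do overlap in the colour $1$, I cannot conclude $M_{H_1}\cap M_{H_2}=\emptyset$ from weights alone; instead I would inspect the explicit edge lists, checking that the edge(s) of $H_1$ realizing weight $1$ (namely from $(1,\alpha^2),(\alpha^2,\alpha)$) are not literal edges of $H_2$ or $G_2$ (whose edges are $(\alpha^2,0),(0,1),(1,\alpha)$). This is exactly the subtlety already exploited in Theorem \ref{r2r-5c3}, where $\Omega$'s overlap but the $M$'s do not. For the upper bound I would exhibit an explicit $5$--cycle in $B^*_4(S,T,0,0)$, analogous to the cycle displayed in Theorem \ref{r2r-5c3}; a natural candidate uses the edge of $H_1$ together with a path back through $B_4$.

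Finally, the order is immediate from Lemma \ref{Red2}: $|V(B^*_4(S,T,0,0))|=2q^2-q(u_0+u_1)-|S|-|T|=2\cdot16-0-1-0=31$, matching the Downs bound. I expect the genuine work to lie almost entirely in the degree tabulation and in the edge-level (not merely weight-level) verification of the two overlapping disjointness conditions $M_{H_1}\cap M_{H_2}=\emptyset$ and $M_{H_1}\cap M_{G_2}=\emptyset$; once those are settled, biregularity with degree set $\{5,6\}$, girth $5$, and order $31$ together certify that $B^*_4(S,T,0,0)$ is a $(\{5,6\};5)$--cage.
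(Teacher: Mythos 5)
Your proposal is correct and follows essentially the same approach as the paper's proof: degrees via Lemma \ref{rgrados}, girth at least $5$ via Theorem \ref{mainth} --- including the key subtlety that $\Omega(H_1)\cap\Omega(H_2)=\Omega(H_1)\cap\Omega(G_2)=\{1\}$ forces an edge-level verification that $M_{H_1}$ is disjoint from $M_{H_2}$ and $M_{G_2}$ rather than a weight-level one --- an explicit $5$--cycle for girth exactly $5$, and the order $2q^2-q(u_0+u_1)-|S|-|T|=31$ from Lemma \ref{Red2} meeting the Downs bound. The steps you defer (the vertex-class degree tabulation identifying the degree-$6$ vertices, and the explicit $5$--cycle using an edge added to a block together with a path through $B_4$) are precisely what the paper writes out, so there is no substantive difference in method.
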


\begin{proof} From Lemma \ref{rgrados} we have that the degree set $D({B_4^{*}(S,T,u_0,u_1)})=\{5,6\}$, and moreover, the set of vertices of degree $6$ is
$\{(i,1)_0: i\in GF(4)\}\cup \{(0,\alpha^{2})_1\}$.

The graph $B_4^{*}(S,T,u_0,u_1)$ has girth at least $5$, since we are under the hypothesis of Theorem \ref{mainth}.
In fact, $\Omega(H_1) \cap \Omega(H_2)$ $= \Omega(H_1) \cap \Omega(G_2)$ $= \{1\}$, but
$M_{H_1} \cap M_{H_2} = M_{H_1} \cap M_{G_2} = \emptyset$, since $(\alpha^2,\alpha)$ is the only edge of weight $1$
in $H_1$, while the only edges of such weight is $(0, 1)$ in $H_2$ and $G_2$.
We also have that $\Omega(G_1) \cap \Omega(H_2) = \Omega(G_1) \cap \Omega(G_2) = \emptyset$, which
implies that $M_{G_1} \cap M_{H_2} = M_{G_1} \cap M_{G_2} = \emptyset$.
Moreover, the girth is exactly five, since the $5$--cycle $((0,0)_0,(0,1)_0,(0,\alpha)_0, (1,\alpha)_1,(1,0)_1)$ lies in  $B_4^{*}(S,T,u_0,u_1)$.

 Finally, by Lemma \ref{Red2} we have that the order of the graph is $2q^{2}-q(u_0+u_1)-|S|-|T|=2(16)-1=31$.
 \end{proof}

\

\begin{figure}[h!]
\begin{center}
\begin{tabular}{ccccl}
  \begin{pspicture}(-1,-1)(1,1)
  \psset{unit=0.6}
  \grPtwo{0.6}{2.6}
  \rput( 1.1, 1.1){\tiny$(0,1)_0$}
  \rput( 1.1,-1.1){\tiny$(0,\alpha)_0$}
  \rput(-1.1,-1.1){\tiny$(0,\alpha^2)_0$}
  \end{pspicture} &
  \begin{pspicture}(-1,-1)(1,1)
  \psset{unit=0.6}
  \grTwoKTwo{0.6}{2.6}
  \rput(-1.1, 1.1){\tiny$(1,0)_0$}
  \rput( 1.1, 1.1){\tiny$(1,1)_0$}
  \rput( 1.1,-1.1){\tiny$(1,\alpha)_0$}
  \rput(-1.1,-1.1){\tiny$(1,\alpha^2)_0$}
  \end{pspicture} &
  \begin{pspicture}(-1,-1)(1,1)
  \psset{unit=0.6}
  \grTwoKTwo{0.6}{2.6}
  \rput(-1.1, 1.1){\tiny$(\alpha,0)_0$}
  \rput( 1.1, 1.1){\tiny$(\alpha,1)_0$}
  \rput( 1.1,-1.1){\tiny$(\alpha,\alpha)_0$}
  \rput(-1.1,-1.1){\tiny$(\alpha,\alpha^2)_0$}
  \end{pspicture} &
  \begin{pspicture}(-1,-1)(1,1)
  \psset{unit=0.6}
  \grTwoKTwo{0.6}{2.6}
  \rput(-1.1, 1.1){\tiny$(\alpha^2,0)_0$}
  \rput( 1.1, 1.1){\tiny$(\alpha^2,1)_0$}
  \rput( 1.1,-1.1){\tiny$(\alpha^2,\alpha)_0$}
  \rput(-1.1,-1.1){\tiny$(\alpha^2,\alpha^2)_0$}
  \end{pspicture} &
  \multirow{2}{1.8cm}{
  \begin{pspicture}(-0.7,-0.7)(3,0)
  \psset{unit=0.6}
  \psline[linestyle=dashed, dash=2pt 3pt](-1,1)(0,1) \rput(1.5,1){$\omega = 1$}
  \psline(-1,0)(0,0) \rput(1.6,-1){$\omega = \alpha^2$}
  \psline[linewidth=0.5pt,linestyle=dotted, dotsep=0.5pt](-1,-1)(0,-1) \rput(1.5,0){$\omega = \alpha$}
  \end{pspicture}
 }
 \\
  \begin{pspicture}(-1,-1)(1,1)
  \psset{unit=0.6}
  \grPThree{0.6}{2.6}
  \rput(-1.1, 1.1){\tiny$(0,0)_1$}
  \rput( 1.1, 1.1){\tiny$(0,1)_1$}
  \rput( 1.1,-1.1){\tiny$(0,\alpha)_1$}
  \rput(-1.1,-1.1){\tiny$(0,\alpha^2)_1$}
  \end{pspicture} &
  \begin{pspicture}(-1,-1)(1,1)
  \psset{unit=0.6}
  \grPThree{0.6}{2.6}
  \rput(-1.1, 1.1){\tiny$(1,0)_1$}
  \rput( 1.1, 1.1){\tiny$(1,1)_1$}
  \rput( 1.1,-1.1){\tiny$(1,\alpha)_1$}
  \rput(-1.1,-1.1){\tiny$(1,\alpha^2)_1$}
  \end{pspicture} &
  \begin{pspicture}(-1,-1)(1,1)
  \psset{unit=0.6}
  \grPThree{0.6}{2.6}
  \rput(-1.1, 1.1){\tiny$(\alpha,0)_1$}
  \rput( 1.1, 1.1){\tiny$(\alpha,1)_1$}
  \rput( 1.1,-1.1){\tiny$(\alpha,\alpha)_1$}
  \rput(-1.1,-1.1){\tiny$(\alpha,\alpha^2)_1$}
  \end{pspicture} &
  \begin{pspicture}(-1,-1)(1,1)
  \psset{unit=0.6}
  \grPThree{0.6}{2.6}
  \rput(-1.1, 1.1){\tiny$(\alpha^2,0)_1$}
  \rput( 1.1, 1.1){\tiny$(\alpha^2,1)_1$}
  \rput( 1.1,-1.1){\tiny$(\alpha^2,\alpha)_1$}
  \rput(-1.1,-1.1){\tiny$(\alpha^2,\alpha^2)_1$}
  \end{pspicture} &
  \\
\end{tabular}
\caption{$B^{*}_{4}(S,T,0,0)-E(B_4)$ with $S=\{0\}$ and $T=\emptyset$ }\label{Fig2_5_6_5}
\end{center}
\end{figure}
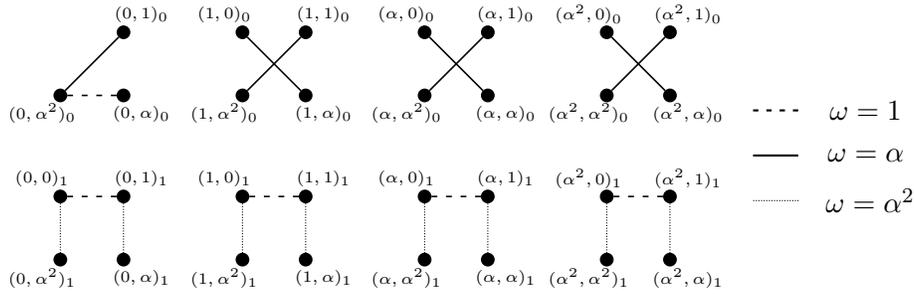

\subsection{Construction of the $(\{6,7\};5)$-cage.}

For this construction we need to modify Reduction $1$ and apply the amalgam operation accordingly as follows:

Let $q=5$ and consider the graph $B_5$. We modify Reduction 1 removing vertices from $P_0$ and vertices of $V_1$ from each block $L_j$ for $j \in GF(5)$ (whereas in Reduction 1 we delete vertices of $V_1$ only from the block $L_0$).

Let $T:=\{3\} \subset GF(5)$, $S:=\{0,3\}\subset GF(5)$, $S_0 = \{(0,y)_0 | y \in S\} \subseteq P_0$,
$T_j = \{(j,b)_1 | b \in T\}  \subseteq L_j$ for $j \in GF(5)$, and let $\displaystyle B_5(S,TT):=B_5-S_0-\bigcup_{j \in GF(5)}T_j$.
Note that the graph $B_5(S,TT)$ has degree set $D(B_5(S,TT))=\{q-1,q\}=\{4,5\}$ and order $2q^2-|S|-q|T|=50-2-5=43$.
Moreover, each vertex in $V_0-P_0$ has degree $q-|T|=4$, each vertex of $V_1$ with second coordinate in $S-T$ has degree
$q-|S-T|=4$, and all other vertices have degree $q=5$.

    Now we will amalgam some graphs into $B_5(S,TT)$. Let $H_1:=\{(2,4),(4,1)\}$, $G_1:=\{(0,2),(2,4),(4,1),(1,3),(3,0)\}$
    and $H_2:=\{(4,0),(0,1),(1,2)\}$. These graphs are a $2$--path, a $5$--cycle and a $3$--path, respectively, with weights
    $\Omega(H_1)=\Omega(G_1)=\{2\}$ and $\Omega(H_2)=\{1\}$.

    Let $B^*_5(S,TT)$ be the graph obtained from the amalgam of
    $H_1$ into $P'_0:=P_0-S_0$, $G_1$ into $P_i$, for all $i \in GF^*(5)$, and $H_2$ into $L'_j=L_j-T_j$, for all $j \in GF(5)$
    (c.f. Figure \ref{Fig3_6_7_5} for an illustration).

    \begin{theorem}\label{67}
Let $q=5$ and let $S,T,H_1,H_2$ and $G_1$ be defined as above.
Then the amalgam graph $B^*_5(S,TT)$ is a $(\{6,7\};5)$--cage of order $43$.
 \end{theorem}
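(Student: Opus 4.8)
The plan is to follow exactly the same strategy used in the proof of Theorem \ref{r2r-5c3}, verifying in turn (a) that the degree set of $B^*_5(S,TT)$ is $\{6,7\}=\{r,r+1\}$ with $r=6$, (b) that the girth is exactly $5$, and (c) that the order is $43=r(r+1)+1$. Since this example is \emph{sporadic}, the main difference is that Reduction $1$ has been modified so that vertices of $V_1$ are removed from \emph{every} block $L_j$, $j \in GF(5)$, rather than only from $L_0$; consequently I cannot invoke Lemma \ref{rgrados} verbatim and must recompute the degrees directly. I expect this degree bookkeeping to be the main obstacle, as the modified reduction changes which vertices lose degree before the amalgam is applied.

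First I would establish the degrees. From the remark preceding the theorem, in $B_5(S,TT)$ each vertex of $V_0-P_0$ has degree $q-|T|=4$, each vertex of $V_1$ with second coordinate in $S-T=\{0\}$ has degree $4$, and all remaining vertices have degree $q=5$. Now I add the amalgam edges: $H_1$ (a $2$--path) contributes degrees $1,2,1$ to the three non-deleted vertices of $P_0'$; $G_1$ (a $5$--cycle) makes every vertex of each $P_i$, $i \in GF^*(5)$, have amalgam-degree $2$; and $H_2$ (a $3$--path) contributes degrees $1,2,1$ to the four non-deleted vertices of each $L_j'$. Combining these, the vertices of $V_0-P_0$ acquire degree $4+2=6$, the generic $V_1$-vertices acquire degree $5+1=6$, while the middle vertex of each $H_2$-path gets $5+2=7$ and the analogous middle vertex of $H_1$ gets $4+2=6$; a careful case check (which I would lay out in a short table mirroring the proof of Theorem \ref{56}) shows that exactly the vertices forming the middle of the $H_2$-amalgam in each block $L_j$ have degree $7$, giving degree set $\{6,7\}$.

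For the girth, I would check that the hypotheses of Theorem \ref{mainth} hold with the given weight sets $\Omega(H_1)=\Omega(G_1)=\{2\}$ and $\Omega(H_2)=\{1\}$ (and $G_2$ absent, so trivially compatible). Since $\Omega(G_1) \cap \Omega(H_2)=\emptyset$ and the shared weight between $H_1$ and $H_2$ can be ruled out at the matching level exactly as in the previous proofs (the single edge of the offending weight in $H_1$ is not matched by an edge of $H_2$ on the relevant coordinates), we obtain $M_{H_1}\cap M_{H_2}=\emptyset$ and girth at least $5$. To show the girth is exactly $5$ I would exhibit an explicit $5$--cycle in $B^*_5(S,TT)$ using one amalgam edge together with a path of length $4$ through $B_5$, analogous to the cycles displayed in Theorems \ref{r2r-5c3} and \ref{56}. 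Finally, the order $2q^2-|S|-q|T|=50-2-5=43=6\cdot 7+1$ confirms that the graph meets Downs' bound \eqref{Dcota} for $g=5$, $r=6$, $m=7$, so $B^*_5(S,TT)$ is a $(\{6,7\};5)$--cage.
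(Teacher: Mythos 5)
Your overall strategy (degrees, then girth via the weight conditions of Theorem \ref{mainth}, then the order against Downs' bound) is the same as the paper's, and your girth and order arguments are essentially sound. But the degree bookkeeping --- which you yourself single out as the main obstacle of this sporadic example --- contains genuine errors, and taken at face value it would \emph{disprove} the theorem rather than prove it. First, $H_2=\{(4,0),(0,1),(1,2)\}$ is a path with three edges on the four vertices $\{4,0,1,2\}$, so it contributes degrees $1,2,2,1$, not $1,2,1$; it has two internal vertices, $0$ and $1$, not one ``middle vertex''. Second, you assign base degree $4$ to the vertices of $P_0'$: in fact $(0,y)_0$ for $y\in\{1,2,4\}$ is connected straight to $(m,y)_1$ for all $m$, and none of these neighbours is deleted (only second coordinate $3$ is removed from $V_1$), so all three vertices of $P_0'$ have degree $5$ in $B_5(S,TT)$ --- exactly as the remark preceding the theorem states (``all other vertices have degree $q=5$''), a remark you quote correctly in your first paragraph and then contradict in your second. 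Consequently the middle vertex $(0,4)_0$ of $H_1$ has degree $5+2=7$, not $4+2=6$; and if your base degree of $4$ were right, the endpoints $(0,1)_0,(0,2)_0$ of $H_1$ would have degree $4+1=5$, so the degree set would be $\{5,6,7\}$ and the graph would not even be biregular. Third, of the two internal vertices of $H_2$ in each block $L_j'$, only $(j,1)_1$ has base degree $5$ and ends with degree $7$; the other, $(j,0)_1$, has second coordinate in $S-T$, hence base degree $4$ and final degree $6$. So your characterization that ``exactly the vertices forming the middle of the $H_2$-amalgam in each block $L_j$ have degree $7$'' is false: the degree-$7$ vertices are $(0,4)_0$ together with $(j,1)_1$ for $j \in GF(5)$. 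The paper's proof avoids all of this case confusion with one clean observation: every degree-$4$ vertex of $B_5(S,TT)$ gains exactly two amalgam edges, and every degree-$5$ vertex gains one or two, so all degrees land in $\{6,7\}$.

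A smaller inaccuracy concerns the girth. You speak of ``the shared weight between $H_1$ and $H_2$'' needing to be ruled out at the matching level, as in Theorem \ref{r2r-5c3}. Here there is no shared weight: $\Omega(H_1)=\{2\}$ and $\Omega(H_2)=\{1\}$ are disjoint, so $M_{H_1}\cap M_{H_2}=\emptyset$ is automatic, and the girth-at-least-$5$ argument needs only the two disjointness statements the paper records. Also, $G_2$ is not ``absent'': since $H_2$ is amalgamated into \emph{every} block $L_j'$, it plays the role of $G_2$ as well, which is precisely why $\Omega(G_1)\cap\Omega(H_2)=\emptyset$ must be (and is) checked. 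Finally, exhibiting a $5$-cycle through $B_5$ plus one amalgam edge would work but is unnecessary: $G_1$ itself is a $5$-cycle contained in the amalgam, which is how the paper concludes the girth is exactly $5$.
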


\begin{proof}
Using the same reasoning as in the proof of Theorem \ref{mainth}, the amalgam graph $B^*_5(S,TT)$ has girth at least $5$, since $\Omega(H_1) \cap \Omega(H_2) = \Omega(G_1) \cap \Omega(H_2) = \emptyset$. The girth is exactly $5$, since $G_1$ is a $5$--cycle.
Moreover, the degree set $D(B^*_5(S,TT))=\{6,7\}$, since all vertices of $B_5(S,TT)$ of degree $4$ obtain two new edges in $B^*_5(S,TT)$, and similarly the vertices of $B_5(S,TT)$ of degree $5$  obtain one or two new edges in $B^*_5(S,TT)$.
Hence, $B^*_5(S,TT)$ is a $(\{6,7\};5)$--cage as desired, since its order satisfies Down's bound.
\end{proof}

\begin{figure}[h!]
\begin{center}
\begin{tabular}{ccccc}
  \rule{0.5cm}{0pt}
  \begin{pspicture}(-1,-1)(1.5,1)
  \psset{unit=0.6}
  \grHoneFive{0.6}{2.6}
  \rput(1.8;5.25){\tiny $(0,1)_0$}
  \rput(1.6;4.25){\tiny $(0,2)_0$}
  \rput(1.8;2.25){\tiny $(0,4)_0$}
  \end{pspicture} &
  \begin{pspicture}(-1,-1)(1.5,1)
  \psset{unit=0.6}
  \grGoneFive{0.6}{2.6}
  \rput(1.4;1.25){\tiny $(1,0)_0$}
  \rput(1.8;5.25){\tiny $(1,1)_0$}
  \rput(1.6;4.25){\tiny $(1,2)_0$}
  \rput(1.6;3.25){\tiny $(1,3)_0$}
  \rput(1.8;2.25){\tiny $(1,4)_0$}
  \end{pspicture} &
  \begin{pspicture}(-1,-1)(1.5,1)
  \psset{unit=0.6}
  \grGoneFive{0.6}{2.6}
  \rput(1.4;1.25){\tiny $(2,0)_0$}
  \rput(1.8;5.25){\tiny $(2,1)_0$}
  \rput(1.6;4.25){\tiny $(2,2)_0$}
  \rput(1.6;3.25){\tiny $(2,3)_0$}
  \rput(1.8;2.25){\tiny $(2,4)_0$}
  \end{pspicture} &
  \begin{pspicture}(-1,-1)(1.5,1)
  \psset{unit=0.6}
  \grGoneFive{0.6}{2.6}
  \rput(1.4;1.25){\tiny $(3,0)_0$}
  \rput(1.8;5.25){\tiny $(3,1)_0$}
  \rput(1.6;4.25){\tiny $(3,2)_0$}
  \rput(1.6;3.25){\tiny $(3,3)_0$}
  \rput(1.8;2.25){\tiny $(3,4)_0$}
  \end{pspicture} &
  \begin{pspicture}(-1,-1)(1.5,1)
  \psset{unit=0.6}
  \grGoneFive{0.6}{2.6}
  \rput(1.4;1.25){\tiny $(4,0)_0$}
  \rput(1.8;5.25){\tiny $(4,1)_0$}
  \rput(1.6;4.25){\tiny $(4,2)_0$}
  \rput(1.6;3.25){\tiny $(4,3)_0$}
  \rput(1.8;2.25){\tiny $(4,4)_0$}
  \end{pspicture} 
 \\
  \rule{0.5cm}{0pt}
  \begin{pspicture}(-1,-1)(1.5,1)
  \psset{unit=0.6}
  \grHtwoFive{0.6}{2.6}
  \rput(1.4;1.25){\tiny $(0,0)_1$}
  \rput(1.8;5.25){\tiny $(0,1)_1$}
  \rput(1.6;4.25){\tiny $(0,2)_1$}
  \rput(1.8;2.25){\tiny $(0,4)_1$}
  \end{pspicture} &
  \begin{pspicture}(-1,-1)(1.5,1)
  \psset{unit=0.6}
  \grHtwoFive{0.6}{2.6}
  \rput(1.4;1.25){\tiny $(1,0)_1$}
  \rput(1.8;5.25){\tiny $(1,1)_1$}
  \rput(1.6;4.25){\tiny $(1,2)_1$}
  \rput(1.8;2.25){\tiny $(1,4)_1$}
  \end{pspicture} &
  \begin{pspicture}(-1,-1)(1.5,1)
  \psset{unit=0.6}
  \grHtwoFive{0.6}{2.6}
  \rput(1.4;1.25){\tiny $(2,0)_1$}
  \rput(1.8;5.25){\tiny $(2,1)_1$}
  \rput(1.6;4.25){\tiny $(2,2)_1$}
  \rput(1.8;2.25){\tiny $(2,4)_1$}
  \end{pspicture} &
  \begin{pspicture}(-1,-1)(1.5,1)
  \psset{unit=0.6}
  \grHtwoFive{0.6}{2.6}
  \rput(1.4;1.25){\tiny $(3,0)_1$}
  \rput(1.8;5.25){\tiny $(3,1)_1$}
  \rput(1.6;4.25){\tiny $(3,2)_1$}
  \rput(1.8;2.25){\tiny $(3,4)_1$}
  \end{pspicture} &
  \begin{pspicture}(-1,-1)(1.5,1)
  \psset{unit=0.6}
  \grHtwoFive{0.6}{2.6}
  \rput(1.4;1.25){\tiny $(4,0)_1$}
  \rput(1.8;5.25){\tiny $(4,1)_1$}
  \rput(1.6;4.25){\tiny $(4,2)_1$}
  \rput(1.8;2.25){\tiny $(4,4)_1$}
  \end{pspicture}
  \\
\end{tabular}
\caption{$B^{*}_{5}(S,TT)-E(B_5)$ with $S=\{0,3\}$ and $T=\{3\}$ }\label{Fig3_6_7_5}
\end{center}
\end{figure}
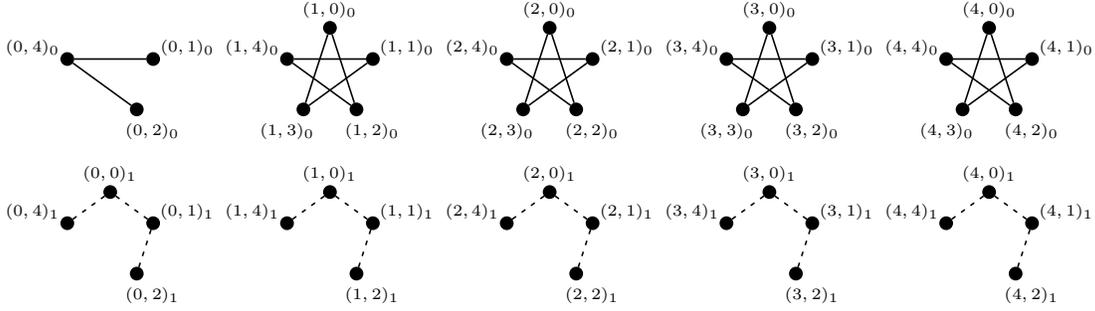

\subsection*{Acknowledgment}
{ Research   supported by the Ministerio de Educaci\'on y Ciencia,
Spain, and the European Regional Development Fund (ERDF) under
project MTM2008-06620-C03-02,  CONACyT-M\'exico under project 57371 and PAPIIT-M\'exico under project 104609-3.
}


\end{document}